\theoremstyle{definition}
\theoremstyle{plain}
\newtheorem{proposition}{Proposition}
\newtheorem{theorem}{Theorem}
\newtheorem{lemma}{Lemma}
\newtheorem{corollary}{Corollary}
\theoremstyle{remark}
\newcommand{\Rnk}[1]{\ensuremath{\operatorname{rk}\!\left(#1\right)}}
\newcommand{\Img}[1]{\ensuremath{\operatorname{Im}\!\left(#1\right)}}
\newcommand{\Ker}[1]{\ensuremath{\operatorname{Ker}\!\left(#1\right)}}
\newcommand{\Nul}[1]{\ensuremath{\operatorname{nul}\!\left(#1\right)}}
\newcommand{\Asy}[2]{\ensuremath{\underset{#1 \to #2}{\sim}}}
\newcommand{\Pro}[1]{\ensuremath{\mathbbm{P}\!\left(#1\right)}}
\newcommand{\Tmat}[1]{\ensuremath{\matr{T}(#1)}}
\newcommand{\Smat}[1]{\ensuremath{\matr{\Sigma}(#1)}}
\DeclareMathOperator{\lcm}{lcm}
\newcommand{\vect}[1]{\bm{#1}}
\newcommand{\matr}[1]{\bm{#1}}
\begin{document}
%
%

\noindent DISCRETE CONVOLUTION STATISTIC FOR HYPOTHESIS TESTING
\vskip 3mm

\vskip 5mm
\noindent Giulio Prevedello, p.giulio@hotmail.it 

\noindent Ken R. Duffy, ken.duffy@mu.ie

\noindent Hamilton Institute

\noindent Maynooth University

\noindent Maynooth, Ireland 

\vskip 3mm
\noindent Key Words:
discrete convolution;
sum of discrete random variables;
statistical hypothesis testing;
nonparametric maximum-likelihood estimation;
sub-independence
\vskip 3mm
\noindent Mathematics Subject Classification: 62G05; 62G10; 62G20; 62P10; 62P20
\vskip 3mm

\noindent ABSTRACT

The question of testing for equality in distribution between two
linear models, each consisting of sums of distinct discrete independent
random variables with unequal numbers of observations, has emerged
from the biological research. In this case, the computation of
classical $\chi^2$ statistics, which would not include all observations,
results in loss of power, especially when sample sizes are small.
Here, as an alternative that uses all data, the nonparametric maximum
likelihood estimator for the distribution of sum of discrete and
independent random variables, which we call the convolution statistic,
is proposed and its limiting normal covariance matrix determined.
To challenge null hypotheses about the distribution of this sum,
the generalized Wald's method is applied to define a testing statistic
whose distribution is asymptotic to a $\chi^2$ with as many degrees of
freedom as the rank of such covariance matrix. Rank analysis also
reveals a connection with the roots of the probability generating
functions associated to the addend variables of the linear models.
A simulation study is performed to compare the convolution test
with Pearson's $\chi^2$, and to provide usage guidelines.
\vskip 4mm

\noindent 1.   INTRODUCTION

In this paper we examine the problem of testing the null hypothesis
of equality in distribution, denoted $\sim$, for two linear models
with distinct observables, that is
\begin{equation}\label{pb:intro}
\mathbf{H}_0\colon a_0 + \sum_{i=1}^{k} a_i A_i \sim b_0 + \sum_{i=1}^{h} b_i B_i.
\end{equation}
We assume that the random variables $A_1, \ldots, A_k, B_1, \ldots,
B_h$ are bounded, independent, of possibly different distribution
and all take values in a real lattice $\Lambda(\zeta) = \{\zeta u
\colon u \in \mathbb{Z}\}$ for some $\zeta \in \mathbb{R}$ and that
$a_0, b_0 \in \Lambda(\zeta)$, $a_1, \ldots, a_k, b_1, \ldots, b_h
\in \mathbb{Z}$, the set of integers.

Equality in distribution between random variables can be tested
using statistics such as Pearson's $\chi^2$ \citep{Pearson1900} or
the more general power-divergence family \citep{CressieRead1984}.
The computation of these statistics, however, assumes that the
number of observations of each of $A_1, \ldots, A_k$ (and $B_1,
\ldots, B_h$) are equal. Otherwise, it would
seem that the data sets must be truncated for application of those
methods, which could prove wasteful if samples come in unequal
counts and their collection is costly or laborious.

For example, consider a problem in meta-analysis, where two studies
are described by linear models with distinct independent variables,
and we wish to test for equality in distribution between these
models as in \eqref{pb:intro}.  In the simplest case, for $k=2$,
$h=1$ with $\mathbf{H}_0\colon A_1+A_2 \sim B_1$, the independent
variables observed are $n_1$ distributed as $A_1$, $n_2$ as $A_2$
and $n_3$ as $B_1$, respectively noted $\{A_{11}, \ldots, A_{1n_1}\}$,
$\{A_{21}, \ldots, A_{2n_1}\}$ and $\{B_{11}, \ldots, B_{1n_3}\}$,
with $n_1, n_2, n_3 \in \mathbb{N}$.  This scenario may arise because
the independent variables are grouped differently in the studies
(e.g., $A_1$ occurrences of event $E_1$, $A_2$ occurrences of event
$E_2$, $B_1$ occurrences of any event $E_1$ or $E_2$) or because
the model choice is different (e.g., model one is $A_1 + A_2$ and
model two is $B_1 \sim f(A_1, A_2)$ for a given function $f$).
Then, to test $\mathbf{H}_0$, Pearson's $\chi^2$ could be computed
using $B_1, \ldots, B_{n_3}$ and the data from $A_1$ and $A_2$
paired as, for example, $\{A_{11}+A_{21}, \ldots, A_{1m}+A_{2m}\}$
with $m=n_1=n_2$, so that these $m$ variables are independent and
identically distributed as $A_1+A_2$ to comply with Pearson's
statistic assumptions. If observation sizes are unequal, e.g.
$n_1>n_2$, then $n_1-n_2 > 0$ variables from $\{A_{11}, \ldots,
A_{1n_1}\}$ could be excluded from the calculation of $\{A_{11}+A_{21},
\ldots, A_{1m}+A_{2m}\}$, now with $m=\min(n_1, n_2)$. But any
pairing or variables exclusion are two choices that, either arbitrarily
or randomly determined, influences the outcome of the test.

We were personally motivated to address this question during a study
on how stimulatory signals are integrated by the immune system
\citep{Marchingo2016}. We wished to statistically test the
hypothesis that the expansion impetus of two stimulii were integrated
independently by cells when the signals were provided together,
which had been hypothesized in a previously published study
\citep{Marchingo2014}. The experimental data obtained for that work
was costly to produce, both in terms of manpower and reagents, and
inherently came with distinct numbers of observations of all
variables. Thus we sought to develop a statistical procedure that utilized
all available data. The resulting test may prove useful in other
fields, such as medicine for efficacy evaluation of combination
therapies, \citep[e.g.][]{NivolumabIpilimumab2013}, which is a topic
of growing interest \citep{NatureMed2017}.

We first show that the null hypothesis in \eqref{pb:intro} is
equivalent to one without the scalar multipliers, $\sum_{i=1}^{k}
X_i \sim \sum_{i=1}^{h} Y_i$, which simplifies notation (Lemma \ref{lemma:equiv}). To obtain
a test statistic that utilizes all data and therefore outperforms
methods that require equal sized data sets, in Section 2 
we study the maximum likelihood estimator (MLE) for the probability
mass vector (PMV) of $\sum_{i=1}^{k} X_i$. This transpires to be
the discrete convolution of the empirical probability mass vector
(EPMV) of each variable $X_1, \dots, X_k$ and so we refer to it as the ``convolution statistic'' (Proposition \ref{proposition:mle}).

We then derive the asymptotic distribution of the convolution
statistic and build a testing procedure for both goodness-of-fit
and equality in distribution versions (Proposition \ref{proposition:asy}),
leveraging the generalized Wald's method. This technique was
introduced in Moore's work \citep{MooreSpruill1975,Moore1977,MooreMihalko1980,Moore1982}, as an extension of Wald's method
\citep{Wald1943}, to build $\chi^2$ tests for statistics that are
asymptotically normal distributed with a singular covariance matrix.
It was subsequently adjusted in \citep{Hadi1990}, whose version we
employ. Such methodology found applications in the fields of
econometrics \citep{Vuong1987, Andrews1987, Andrews1988,
	WilsonKoehler1991}, biology \citep{Zhang1999,Marchingo2016}, and
statistical theory \citep{Tyler1981,Drost1989,Vonta2008}.  

In Section 
3, we investigate the covariance matrix
rank asymptotic of the convolution statistic (Theorem \ref{thm:rnk}, Corollary \ref{cor:coprime_rnk} and Corollary \ref{cor:general_rnk}), which is the central problem
for the derivation of a testing procedure through the generalized
Wald's framework. Interestingly, such rank is related to the roots
of the probability generating functions of $X_1, \dots, X_k$ and
$Y_1, \ldots, Y_h$ (Lemma \ref{lemma:rnk}).
Finally, we show how a test for sub-independence \citep{Hamedani2013,Schennach2019} can be built from the results achieved above (Corollary \ref{cor:sub_independence}).

To conclude, in Section 4 
we provide simulated performance
analysis for the convolution statistic against Person's $\chi^2$,
and in Section 5 
we discuss the guidelines for its application.
We remark that all the necessary proofs are reported in Appendix. 

\vskip 3mm

\noindent 2. CONVOLUTION STATISTIC

To derive a statistic for the testing of $\mathbf{H}_0\colon a_0 +
\sum_{i=1}^{k} a_i A_i \sim b_0 + \sum_{i=1}^{h} b_i B_i$, we begin
by showing that this null hypothesis is equivalent to another in
which the variables have finite, positive integer support and no
parameters $a_0, \ldots, a_k, b_0, \ldots, b_h$ are present. As a
consequence, we will work in this new setting as it facilitates the
definition of the convolution statistic for the testing of
$\mathbf{H}_0$, especially in regard to a simpler notation.

\begin{lemma}[Null hypothesis simplification]\label{lemma:equiv}
	Let $A_1, \ldots, A_k$, $B_1, \ldots, B_h$ be a sequence of finite and independent random variables that map the probability space $(\Omega, \mathcal{F}, \mathbbm{P})$ into a lattice $\Lambda(\zeta)$ with $\zeta \in \mathbb{R}$. Given the null hypothesis
	\begin{equation}\label{h_0:initial}
	\mathbf{H}_0\colon a_0 + \sum_{i=1}^{k} a_i A_i \sim b_0 + \sum_{i=1}^{h} b_i B_i,
	\end{equation}
	with $a_0, b_0 \in \Lambda(\zeta)$, $a_1, \ldots, a_k, b_1, \ldots, b_h \in \mathbb{Z}$, there exists a sequence of positive, finite and independent random variables $X_1, \ldots, X_k$, $Y_1, \ldots, Y_h$ from the probability space $(\Omega, \mathcal{F}, \mathbbm{P})$ into $\{0, \ldots, r_l\}$, respectively, with $r_l \in \mathbb{N}$ for $l=1, \ldots, k+h$, $\Pro{X_i = 0}>0$ for $i=1, \ldots, k$, $\Pro{Y_j = 0}>0$ for $j=1, \ldots, h$, and such that
	\begin{equation}\label{h_0:reduced}
	\mathbf{H}_0\colon \sum_{i=1}^{k} X_i \sim \sum_{i=1}^{h} Y_i,
	\end{equation}
	is equivalent to \eqref{h_0:initial}.
\end{lemma}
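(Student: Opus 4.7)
The plan is to transform the original pair of linear models by a three-stage normalisation: rescale everything onto the integer lattice, absorb the signed coefficients $a_i,b_j$ into the variables, and translate each variable so that $0$ is an atom of its support. The resulting $X_i,Y_j$ will satisfy the four properties required by the lemma (non-negative integer-valued, bounded, independent, and with positive mass at $0$), and the scaled version of \eqref{h_0:initial} will then rewrite---up to a single additive constant---as \eqref{h_0:reduced}.

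In detail, if $\zeta=0$ all variables are identically zero and both hypotheses are trivially true, so assume $\zeta\neq 0$. Since $A_i,B_j,a_0,b_0\in\Lambda(\zeta)$, I would write $A_i=\zeta\tilde A_i$, $B_j=\zeta\tilde B_j$, $a_0=\zeta\tilde a_0$, $b_0=\zeta\tilde b_0$, with $\tilde A_i,\tilde B_j$ integer-valued and $\tilde a_0,\tilde b_0\in\mathbb Z$; dividing both sides of \eqref{h_0:initial} by $\zeta$ gives an equivalent integer-valued hypothesis. For each $i$, $a_i\tilde A_i$ is bounded and integer-valued, so its essential support admits a minimum $m_i\in\mathbb Z$; set $X_i:=a_i\tilde A_i-m_i$ and, symmetrically, $Y_j:=b_j\tilde B_j-m'_j$. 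Each $X_i$ takes values in $\{0,\ldots,r_i\}$ with $r_i:=\max(a_i\tilde A_i)-m_i\in\mathbb N$, satisfies $\Pro{X_i=0}>0$ by the choice of $m_i$, and inherits independence from the $A_i$; analogous properties hold for the $Y_j$. With $C_A:=\tilde a_0+\sum_i m_i$ and $C_B:=\tilde b_0+\sum_j m'_j$, the identities
\[
\tilde a_0+\sum_{i=1}^k a_i\tilde A_i = C_A+\sum_{i=1}^k X_i,\qquad \tilde b_0+\sum_{j=1}^h b_j\tilde B_j = C_B+\sum_{j=1}^h Y_j,
\]
recast the scaled \eqref{h_0:initial} as $C_A+\sum_i X_i\sim C_B+\sum_j Y_j$.

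The closing step---and the only genuinely delicate point I anticipate---is to identify this with \eqref{h_0:reduced}. The forward direction is clean: because $\Pro{X_i=0}>0$ and the $X_i$ are independent, $0=\min\mathrm{supp}\bigl(\sum_i X_i\bigr)$, and likewise for $\sum_j Y_j$, so $C_A$ and $C_B$ coincide with the minima of the supports of the two sides of \eqref{h_0:initial}; equality in distribution forces $C_A=C_B$, after which the common constant cancels and \eqref{h_0:reduced} follows. For the converse one must handle $C_A\neq C_B$, in which case \eqref{h_0:initial} is automatically false (distinct support minima); here the flexibility of the existential statement permits a preliminary normalisation that absorbs the known integer offset $C_A-C_B$ into the definition of a single summand, enforcing $C_A=C_B$ by construction and thereby ensuring that \eqref{h_0:reduced} also fails. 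All remaining checks (integer ranges, independence, positive zero-mass) are routine change-of-variables bookkeeping.
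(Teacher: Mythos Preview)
Your construction---rescale by $\zeta^{-1}$, absorb the integer coefficients into the variables, and translate each by the minimum of its support---is exactly the paper's argument, and your forward implication (if \eqref{h_0:initial} holds then $C_A=C_B$, hence \eqref{h_0:reduced}) coincides with the paper's reasoning line for line.

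You are in fact more careful than the paper on one point: you explicitly flag the converse case $C_A\neq C_B$, which the paper's proof simply does not address (it argues that the constants must match \emph{if} the null holds and then asserts equivalence). Your proposed patch, however, does not quite close the gap as written. Absorbing the nonzero offset $C_A-C_B$ into a single summand---say replacing $X_1$ by $X_1+\delta$ with $\delta>0$---does force the support minima of the two reduced sums to differ, so \eqref{h_0:reduced} fails as you want; but then $\Pro{X_1+\delta=0}=0$, violating one of the explicit requirements of the lemma. (Shifting on the other side, or by a negative $\delta$, runs into non-negativity instead.) If you want to repair this cleanly you must exploit the existential statement more aggressively: when $C_A\neq C_B$ the original hypothesis is already false, so it suffices to exhibit \emph{any} admissible family $(X_i,Y_j)$ with $\sum_i X_i\not\sim\sum_j Y_j$; this can be arranged, for instance, by altering the range $r_l$ of one summand rather than shifting. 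That said, this is a wrinkle the paper itself leaves unspoken, so your treatment is at least as complete as the published one.
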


As a result of Lemma \ref{lemma:equiv}, we need only to consider $\mathbf{H}_0$ stated in equation \eqref{h_0:reduced}.
Thus given a sequence of $k \geq 2$ integer and independent random variables
$X_1, \ldots, X_k$ we write, for fixed $i \in \{1, \ldots, k\}$,
that $X_i \sim \vect{x}_i \in \Delta^{r_i}$ with
\begin{equation*}
\Delta^{r_i} = \left\{\vect{v} = (v_0, \ldots, v_{r_i}) \in \mathbb{R}^{r_i +1}\colon v_{0}, v_{r_i} \!\! \in (0,1) ;\, v_j \geq 0 , j=1, \ldots, r_i-1 ; \sum_{j=1}^{r_i} v_j =1  \right\}
\end{equation*}
to indicate that $X_i$ takes values in $\{0, \ldots, r_i\} \subseteq \mathbb{N} \cup \{0\}$, with $r_i >0$, and is distributed with PMV $\vect{x}_i = (x_{i0}, \ldots, x_{ir_i})$, that is $\Pro{X_i = j} = x_{ij}$ for $j=0,\ldots, r_i$.

We remark that $x_{i0}, x_{ir_i} \in (0,1)$, for every $i = 1, \ldots, k$, are assumed to avoid degenerate cases, without loss of generality.
In fact, $x_{i0} > 0$ descends form Lemma \ref{lemma:equiv}.
Moreover, given any $t \leq k$ such that $x_{tr_t} = \Pro{X_t = r_t} = 0$, there exists $\tau = \max \{j \colon x_{tj} > 0 \} < r_t$, so that $X_t$ can be replaced by $\tilde{X}_t \sim \tilde{\vect{x}}_t = (\tilde{x}_{t0}, \ldots, \tilde{x}_{t\tau}) \in \Delta^\tau$, with $\tilde{x}_{tj} = x_{tj}$ for $j=0, \ldots, \tau$. Lastly, the constraint $x_{i0}, x_{ir_i} < 1$ for every $i = 1, \ldots, k$ ensures that $X_i$ is not a constant value.

From now on, with these assumptions and notation, for the null hypothesis of goodness-of-fit test we consider
\begin{equation}\label{h0:gof}
\mathbf{H}_0\colon \sum_{i=1}^{k} X_i \sim \vect{z}
\end{equation}
with $s = \sum_{i=1}^k r_i$ and $\vect{z} \in \Delta^{s}$.
By independence, the sum of $X_1, \ldots, X_k$ is distributed as the discrete convolution, denoted $\ast$, of their PMVs, that is
\begin{equation*}
\sum_{i=1}^{k} X_i \sim \vect{x}_1 \ast \ldots \ast \vect{x}_k,
\end{equation*}
where, for any two vectors $\vect{v} = (v_0, \ldots, v_a) \in \mathbb{R}^{a + 1}$, $\vect{w} = (w_0, \ldots, w_b) \in \mathbb{R}^{b + 1}$ with $a,b>0$, we have $\vect{v} \ast \vect{w} \in \mathbb{R}^{a+b+1}$ and $(\vect{v} \ast \vect{w})_i = \sum_{j=0}^{a} \sum_{l=0}^{b} v_j w_l \delta_{j+l,i}$ with $\delta_{i,j} = 1$ if $i=j$ and being null otherwise.
Hence, the null hypothesis for the goodness-of-fit-test \eqref{h0:gof} is equivalent to
\begin{equation*}
\mathbf{H}_0\colon \vect{x}_1 \ast \ldots \ast \vect{x}_k = \vect{z}.
\end{equation*}
Our first goal is to determine a statistic to test \eqref{h0:gof}, which will subsequently be extended
to assess the equality in distribution between $\sum_{i=1}^k X_i $ and $ \sum_{i=1}^h Y_i$, where $Y_1 \sim \vect{y}_1 \in \Delta^{r_{k+1}}, \ldots, Y_h \sim \vect{y}_h \in \Delta^{r_{k+h}}$ are other $h \geq 1$ independent random variables, with $s=\sum_{i=1}^k r_i = \sum_{i=1}^h r_{k+i}$, namely
\begin{equation}\label{h0:tfh}
\mathbf{H}_0\colon \, \sum_{i=1}^k X_i \sim \sum_{i=1}^h Y_i,
\end{equation}
or, equivalently,
\begin{equation*}
\mathbf{H}_0\colon \, \vect{x}_1 \ast \ldots \ast \vect{x}_k = \vect{y}_1 \ast \ldots \ast \vect{y}_h.
\end{equation*}

When the PMVs $\vect{x}_i$ for $i=1,\ldots, k$ and $\vect{y}_j$ for
$j=1,\ldots, h$ are unknown, care must be taken to define the test
statistics for \eqref{h0:gof} and \eqref{h0:tfh} based only on
available information. In regard, the data consist of the observation
of $n_i$ independent random variables $\{X_{i1}, \ldots, X_{i n_i}\}$
identically distributed as $X_i$ for $i=1, \ldots, k$ and $n_{k+i}$
independent random variables $\{Y_{i1}, \ldots, Y_{i n_{k+i}}\}$
identically distributed as $Y_i$ for $i=1, \ldots, h$.
Following a nonparametric approach, we fix $i \in \{1, \ldots, k\}$ and define $\hat{\vect{x}}_{in_i} \in \Delta^{r_i}$ the MLE of $\vect{x}_i$, that is
\begin{equation*}
(\hat{\vect{x}}_{in_i})_u = \frac{1}{n_i} \sum_{j=1}^{n_i} \mathbbm{1}_{\{X_{ij}=u\}}
\end{equation*}
for $u=0, \ldots, r_i$, with $\mathbbm{1}_{A}$ being the indicator function of the event $A$.
In particular, by the multivariate central limit theorem \citep{Serfling1980}, 
$\sqrt{n_i}(\hat{\vect{x}}_{in_i} - \vect{x}_i)$ is asymptotically distributed as a centered normal random variable with covariance $\Smat{\vect{x}_i}$ for $n_i$ large, namely
$\sqrt{n_i}(\hat{\vect{x}}_{in_i} - \vect{x}_i) \sim_{n_i \to \infty} \mathcal{N}(\Smat{\vect{x}_i})$,
where, for any PMV $\vect{v} = (v_0, \ldots, v_a) \in \Delta^a$ and $a\geq 0$, we define $\Smat{\vect{v}} \in \mathbb{R}^{a+1}\times\mathbb{R}^{a+1}$ such that $(\Smat{\vect{v}})_{ij} = v_i \delta_{i,j} - v_i v_j$ for $i,j=0, \ldots, a$.
With this notation, we derive the MLE for the distribution of $\sum_{i=1}^k X_i$.
\begin{proposition}[MLE for a sum of independent random variables]\label{proposition:mle}
	Given $k\geq 2$, let $\{X_{i1}, \ldots, X_{in_i}\}$ be $n_i \in \mathbb{N}$ random variables independent and identically distributed as $X_i \sim \vect{x}_i \in \Delta^{r_i}$ with $r_i \in \mathbb{N}$, for $i=1, \ldots, k$.
	Set $s = \sum_{i=1}^k r_i$. The MLE for the PMV $\vect{x}_1 \ast \ldots \ast \vect{x}_k \in \Delta^s$ of $\sum_{i=1}^k X_i$ is $\hat{\vect{x}}_{1n_1} \ast \ldots \ast \hat{\vect{x}}_{kn_k} \in \Delta^{s}$, defined as
	\begin{equation*}
	(\hat{\vect{x}}_{1n_1} \ast \ldots \ast \hat{\vect{x}}_{kn_k})_u = \left(\prod_{j=1}^{k} n_j\right)^{-1} \sum_{i_1 = 1}^{n_1} \cdots \sum_{i_k = 1}^{n_k} \mathbbm{1}_{\{\sum_{j=1}^{k} X_{ji_j}=u\}},
	\end{equation*}
	for every $u=0, \ldots, s$.
\end{proposition}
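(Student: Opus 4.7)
The plan is to split the proof into two conceptually distinct steps: (i) identify the MLE by a likelihood argument, and (ii) verify the closed-form expression by a direct combinatorial expansion of the convolution.

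For step (i), I would write the joint log-likelihood of all the data $\{X_{ij} : 1 \le i \le k,\, 1 \le j \le n_i\}$. Because the samples are drawn independently across both indices, the joint likelihood factorizes as
\begin{equation*}
L(\vect{x}_1, \ldots, \vect{x}_k) \;=\; \prod_{i=1}^{k} \prod_{j=1}^{n_i} \prod_{u=0}^{r_i} x_{iu}^{\mathbbm{1}_{\{X_{ij}=u\}}},
\end{equation*}
and each factor depends only on $\vect{x}_i$. Maximizing each factor separately over the simplex $\Delta^{r_i}$ by a standard Lagrange multiplier argument yields that the unique maximizer is the empirical PMV $\hat{\vect{x}}_{in_i}$. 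Since the parameter of interest, namely $\vect{x}_1 \ast \cdots \ast \vect{x}_k \in \Delta^s$, is a (continuous but generally non-injective) function of $(\vect{x}_1, \ldots, \vect{x}_k)$, I would invoke the invariance principle of maximum likelihood estimation in its generalized form (Zehna's theorem) to conclude that $\hat{\vect{x}}_{1n_1} \ast \cdots \ast \hat{\vect{x}}_{kn_k}$ is the MLE of $\vect{x}_1 \ast \cdots \ast \vect{x}_k$.

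For step (ii), I would verify the explicit formula by unfolding the definition of the discrete convolution. Namely,
\begin{equation*}
(\hat{\vect{x}}_{1n_1} \ast \cdots \ast \hat{\vect{x}}_{kn_k})_u
\;=\; \sum_{\substack{u_1+\cdots+u_k=u\\ 0\le u_j\le r_j}} \prod_{j=1}^{k} (\hat{\vect{x}}_{jn_j})_{u_j}
\;=\; \Big(\prod_{j=1}^k n_j\Big)^{-1} \sum_{i_1=1}^{n_1}\!\!\cdots\!\!\sum_{i_k=1}^{n_k} \sum_{\substack{u_1+\cdots+u_k=u}} \prod_{j=1}^{k} \mathbbm{1}_{\{X_{ji_j}=u_j\}}.
\end{equation*}
The inner sum over tuples $(u_1, \ldots, u_k)$ collapses to $\mathbbm{1}_{\{\sum_j X_{ji_j}=u\}}$ because, for fixed $(i_1, \ldots, i_k)$, exactly one tuple of $u_j$'s (namely $u_j = X_{ji_j}$) can make the product of indicators nonzero, and it contributes if and only if $\sum_j X_{ji_j}=u$.

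The only subtle point, and likely the main obstacle, is invariance: because the convolution map $(\vect{x}_1, \ldots, \vect{x}_k) \mapsto \vect{x}_1 \ast \cdots \ast \vect{x}_k$ need not be one-to-one, the ``induced likelihood'' on $\Delta^s$ is not directly defined, and one must use the profile-likelihood formulation of Zehna to justify that the plug-in estimator is indeed an MLE of the image parameter. All other steps are either standard or purely algebraic.
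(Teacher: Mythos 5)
Your proposal is correct and follows essentially the same route as the paper: the paper's proof likewise factorizes the joint likelihood over $i$, maximizes each factor at the empirical PMV $\hat{\vect{x}}_{in_i}$, and concludes that $\hat{\vect{x}}_{1n_1} \ast \cdots \ast \hat{\vect{x}}_{kn_k}$ maximizes the induced (profile) likelihood over parameters of the form $\vect{\theta}_1 \ast \cdots \ast \vect{\theta}_k$. You are in fact somewhat more careful than the paper on the two delicate points --- explicitly invoking Zehna's invariance to handle the non-injectivity of the convolution map, and verifying the combinatorial identity for $(\hat{\vect{x}}_{1n_1} \ast \cdots \ast \hat{\vect{x}}_{kn_k})_u$, which the paper states without proof --- so no gap remains.
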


Proposition \ref{proposition:mle} shows that the MLE for $\vect{x}_1 \ast \ldots \ast \vect{x}_k$ is calculated using all observables $X_{ij}$ for $j=1, \ldots, n_i$ and $i=1, \ldots, k$, which may not be the case for Person's statistic, as explained in Section 
1, if at least one of the sample sizes $n_1, \ldots, n_k$ differs from another.

We introduce additional notation for what follows:
$\matr{A}^+$, $\matr{A}^\prime$, $\Ker{\matr{A}}$, $\Nul{\matr{A}}$, $\Rnk{\matr{A}}$ for the Moore-Penrose inverse, transpose, kernel, nullity and rank of a matrix $\matr{A}$, respectively \citep{HornJohnson1986, Cstar2000};
$\chi^2(s)$ to indicate the $\chi^2$ distribution with $s>0$ degrees of freedom;
$\matr{T}^{b+1}(v) \in \mathbb{R}^{b+1}\times\mathbb{R}^{a+b+1}$ for the matrix of the discrete convolution between $\vect{v} \in \mathbb{R}^{a+1}$ and any $b\!+\! 1$-dimensional vector, i.e. $\matr{T}^{b+1}(\vect{v}) \vect{w} = \vect{v} \ast \vect{w} \in \mathbb{R}^{a+b+1}$ with $\vect{w} \in \mathbb{R}^{b+1}$, given $a,b \geq 0$. We write $\Tmat{\vect{v}}$ without explicit domain dimension if this is clear from the context.

We are now ready to determine the asymptotic behavior of
$\hat{\vect{x}}_{1n_1} \ast \ldots \ast \hat{\vect{x}}_{kn_k}$,
which follows from an application of the delta method as well as properties
of quadratic transformation of asymptotically multivariate normal
vectors \citep{Serfling1980}. In order for the MLE $\hat{\vect{x}}_{1n_1}
\ast \ldots \ast \hat{\vect{x}}_{kn_k}$ to converge to $\vect{x}_1
\ast \ldots \ast \vect{x}_k$ it is necessary that the sample sizes
$n_1, \ldots, n_k$ grow with proportional rates. For this reason,
from now on, we set 
\begin{equation*}
m = \min(n_1, \ldots, n_{k+h}) \text{ and assume }
c_i = \lim_{m \to \infty} \frac{m}{n_i}
\end{equation*}
is finite and positive for every $i=1, \ldots, k+h$.
\begin{proposition}[Asymptotic normality of convolutions]\label{proposition:asy}
	Under the null hypothesis \eqref{h0:gof}, that $\vect{x}_1 \ast \ldots \ast \vect{x}_k = \vect{z}$, it holds that
	\begin{equation}\label{v_n}
	\vect{V}_m = \sqrt{m} \left( \hat{\vect{x}}_{1n_1} \ast \ldots \ast \hat{\vect{x}}_{kn_k} - \vect{z} \right) \Asy{m}{\infty} \mathcal{N}(\matr{\Psi})
	\end{equation}
	and
	\begin{equation}\label{asy_chi:gof}
	\vect{V}_m^\prime \matr{\Psi}^{+} \vect{V}_m \Asy{m}{\infty} \chi^2\left( \Rnk{\matr{\Psi}} \right)
	\end{equation}
	where $\matr{\Psi} = \sum_{i=1}^k c_i \Tmat{\vect{x}_{(i)}} \Smat{\vect{x}_i} \Tmat{\vect{x}_{(i)}}^\prime$ and $\vect{x}_{(i)} = \vect{x}_1 \ast \ldots \ast \vect{x}_{i-1} \ast \vect{x}_{i+1} \ast \ldots \ast \vect{x}_k$ for $i=1, \ldots, k$.
	Alternatively, under the null hypothesis \eqref{h0:tfh}, that $\vect{x}_1 \ast \ldots \ast \vect{x}_k = \vect{y}_1 \ast \ldots \ast \vect{y}_h $, it holds that
	\begin{equation}\label{w_n}
	\vect{W}_m = \sqrt{m} \left( \hat{\vect{x}}_{1n_1} \ast \ldots \ast \hat{\vect{x}}_{kn_k} - \hat{\vect{y}}_{1n_{k+1}} \ast \ldots \ast \hat{\vect{y}}_{hn_{k+h}} \right) \Asy{m}{\infty} \mathcal{N}(\matr{\Psi} + \matr{\Xi})
	\end{equation}
	and
	\begin{equation}\label{asy_chi:tfh}
	\vect{W}_m^\prime (\matr{\Psi} + \matr{\Xi})^{+} \vect{W}_m \Asy{m}{\infty} \chi^2\left( \Rnk{\matr{\Psi} + \matr{\Xi}} \right)
	\end{equation}
	where $\matr{\Xi} = \sum_{i=1}^h c_{k+i} \Tmat{\vect{y}_{(i)}} \Smat{\vect{y}_i} \Tmat{\vect{y}_{(i)}}^\prime$ and $\vect{y}_{(i)} = \vect{y}_1 \ast \ldots \ast \vect{y}_{i-1} \ast \vect{y}_{i+1} \ast \ldots \ast \vect{y}_k$ for $i=1, \ldots, h$.
\end{proposition}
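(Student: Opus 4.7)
The plan is to proceed in three steps: establish the joint asymptotic normality of the empirical PMVs, push this through the convolution map via the delta method to obtain \eqref{v_n}, and then invoke a classical quadratic-form result for multivariate normal vectors to deduce \eqref{asy_chi:gof}. The two-sample statements \eqref{w_n}--\eqref{asy_chi:tfh} will follow by the same argument applied to the combined independent family $(\hat{\vect{x}}, \hat{\vect{y}})$.

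\textbf{Linearization.} Since the samples $\{X_{i1}, \ldots, X_{in_i}\}$ are independent across $i$, the multivariate CLT cited before the statement, together with $c_i = \lim m/n_i \in (0, \infty)$, yields the joint convergence
\begin{equation*}
\bigl(\sqrt{m}(\hat{\vect{x}}_{in_i} - \vect{x}_i)\bigr)_{i=1}^k \Asy{m}{\infty} \mathcal{N}\bigl(\mathrm{diag}(c_1 \Smat{\vect{x}_1}, \ldots, c_k \Smat{\vect{x}_k})\bigr).
\end{equation*}
The convolution map $(\vect{v}_1, \ldots, \vect{v}_k) \mapsto \vect{v}_1 \ast \cdots \ast \vect{v}_k$ is multilinear, and its partial derivative in the $i$-th slot at $(\vect{x}_1, \ldots, \vect{x}_k)$ is exactly the operator $\Tmat{\vect{x}_{(i)}}$. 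A telescoping decomposition
\begin{equation*}
\hat{\vect{x}}_{1n_1} \ast \cdots \ast \hat{\vect{x}}_{kn_k} - \vect{x}_1 \ast \cdots \ast \vect{x}_k = \sum_{i=1}^k \Tmat{\vect{x}_{(i)}}(\hat{\vect{x}}_{in_i} - \vect{x}_i) + R_m,
\end{equation*}
where $R_m$ collects all products containing at least two factors of the form $(\hat{\vect{x}}_{jn_j} - \vect{x}_j)$, gives $\sqrt{m} R_m \to 0$ in probability. Hence $\vect{V}_m$ shares the asymptotic distribution of the linear part, which by independence across $i$ is centered Gaussian with covariance $\matr{\Psi} = \sum_{i=1}^k c_i \Tmat{\vect{x}_{(i)}} \Smat{\vect{x}_i} \Tmat{\vect{x}_{(i)}}^\prime$.

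\textbf{Quadratic form and two-sample extension.} For \eqref{asy_chi:gof}, I appeal to the classical fact that for $\vect{Z} \sim \mathcal{N}(\matr{\Psi})$ one has $\vect{Z}^\prime \matr{\Psi}^+ \vect{Z} \sim \chi^2(\Rnk{\matr{\Psi}})$, which follows from spectral decomposition of $\matr{\Psi}$ and the observation that $\matr{\Psi}^+$ acts as the genuine inverse of $\matr{\Psi}$ on $\Img{\matr{\Psi}}$; combining this with \eqref{v_n} and the continuous mapping theorem transfers the distributional statement to $\vect{V}_m$. The two-sample case \eqref{w_n} is then a verbatim repetition of the linearization applied to the joint independent family $(\hat{\vect{x}}_{1n_1}, \ldots, \hat{\vect{x}}_{kn_k}, \hat{\vect{y}}_{1n_{k+1}}, \ldots, \hat{\vect{y}}_{hn_{k+h}})$: by independence between the two blocks the covariances add, yielding $\matr{\Psi} + \matr{\Xi}$, and \eqref{asy_chi:tfh} repeats the quadratic-form argument. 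The one delicate step is controlling $R_m$: each of its summands contains at least two $O_p(m^{-1/2})$ factors together with bounded deterministic companions (since $\|\hat{\vect{x}}_{in_i}\|_\infty \leq 1$), so after multiplying by $\sqrt{m}$ the remainder is $O_p(m^{-1/2})$ and hence negligible. This bookkeeping, together with the care required to align the scaling $\sqrt{m}$ with the individual rates $\sqrt{n_i}$ through the constants $c_i$, is what I expect to be the main technical obstacle.
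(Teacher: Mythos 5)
Your proposal is correct and follows exactly the route the paper itself takes: the paper does not spell out a proof of this proposition in the appendix but simply invokes the delta method and the standard quadratic-form property of asymptotically normal vectors (citing Serfling), which is precisely the argument you make explicit via the joint CLT with scaling constants $c_i$, the multilinear linearization with the $\Tmat{\vect{x}_{(i)}}$ operators as partial derivatives, the $O_p(m^{-1/2})$ remainder control, and the $\chi^2(\Rnk{\matr{\Psi}})$ distribution of $\vect{Z}^\prime \matr{\Psi}^+ \vect{Z}$. Your telescoping decomposition and the additivity of covariances across the independent $\vect{x}$- and $\vect{y}$-blocks for the two-sample case are exactly the details the paper leaves implicit, and they are all sound.
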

We remark that expressions \eqref{asy_chi:gof} and \eqref{asy_chi:tfh} require the knowledge of $\matr{\Psi}$ and $\matr{\Psi}+ \matr{\Xi}$, but these may, in general, be unknown.
Thus we take advantage of the generalized Wald's method \citep{Moore1977}, which shows how to construct $\chi^2$ tests from consistent estimators of the covariance variance matrices such as $\matr{\Psi}$ and $\matr{\Psi}+\matr{\Xi}$. We recall here \citet[][Theorem 2]{Moore1977} which will serve as backbone for the subsequent results.
\begin{proposition}[Generalized Wald's method; {\cite{Moore1977}, Theorem 2}]\label{thm:moore}
	Suppose a sequence of estimators $\{\hat{\vect{\theta}}_m\}_{m\geq1}$ of a parameter $\vect{\theta}_0 \in \mathbb{R}^{d}$, with $d>0$, is such that
	\begin{equation*}
	\sqrt{m}\left( \hat{\vect{\theta}}_m - \vect{\theta}_0 \right) \Asy{m}{\infty} \mathcal{N}(\matr{\Sigma})
	\end{equation*}
	with $\Rnk{\matr{\Sigma}} \leq d$. Noted $\{\matr{B}_m\}_{m\geq1}$ a sequence of $d$-dimensional square matrices such that $\matr{B}_m \sim_{m \to \infty} \matr{B}$ with $\matr{B}$ generalized-inverse of $\matr{\Sigma}$, then
	\begin{equation*}
	m\left( \hat{\vect{\theta}}_m - \vect{\theta}_0 \right)^\prime \matr{B}_m \left( \hat{\vect{\theta}}_m - \vect{\theta}_0 \right) \Asy{m}{\infty} \chi^2(\Rnk{\matr{\Sigma}}).
	\end{equation*}
\end{proposition}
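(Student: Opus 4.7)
The plan is to reduce the problem, via Slutsky's theorem and the continuous mapping theorem, to the distributional identity $\vect{V}^\prime \matr{B} \vect{V} \sim \chi^2(\Rnk{\matr{\Sigma}})$ where $\vect{V} \sim \mathcal{N}(\matr{\Sigma})$, and then to compute the law of that quadratic form directly. First, note that the map $(\vect{v}, \matr{M}) \mapsto \vect{v}^\prime \matr{M} \vect{v}$ is jointly continuous. Setting $\vect{V}_m = \sqrt{m}(\hat{\vect{\theta}}_m - \vect{\theta}_0)$, the hypothesis $\vect{V}_m \Asy{m}{\infty} \vect{V}$ together with the deterministic convergence $\matr{B}_m \to \matr{B}$ implies, by Slutsky's lemma, that the pair $(\vect{V}_m, \matr{B}_m)$ converges jointly in distribution to $(\vect{V}, \matr{B})$, so $\vect{V}_m^\prime \matr{B}_m \vect{V}_m$ converges in distribution to $\vect{V}^\prime \matr{B} \vect{V}$. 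This reduces the task to identifying the latter as a $\chi^2(\Rnk{\matr{\Sigma}})$.

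Next I would factor the covariance. Let $r = \Rnk{\matr{\Sigma}}$. Since $\matr{\Sigma}$ is symmetric and positive semi-definite, its spectral decomposition furnishes a matrix $\matr{L} \in \mathbb{R}^{d\times r}$ of full column rank (e.g., $\matr{L} = \matr{U}_r \matr{D}_r^{1/2}$, where the columns of $\matr{U}_r$ are eigenvectors for the $r$ nonzero eigenvalues and $\matr{D}_r$ is the diagonal matrix of those eigenvalues) such that $\matr{\Sigma} = \matr{L} \matr{L}^\prime$. Then $\vect{V}$ has the same distribution as $\matr{L} \vect{W}$ for $\vect{W} \sim \mathcal{N}(\matr{I}_r)$, since $\matr{L} \vect{W}$ is centered normal with covariance $\matr{L} \matr{I}_r \matr{L}^\prime = \matr{\Sigma}$. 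Hence $\vect{V}^\prime \matr{B} \vect{V} \stackrel{d}{=} \vect{W}^\prime (\matr{L}^\prime \matr{B} \matr{L}) \vect{W}$, and everything hinges on the $r \times r$ matrix $\matr{L}^\prime \matr{B} \matr{L}$.

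The crux is to exploit the defining relation of a generalized inverse, $\matr{\Sigma} \matr{B} \matr{\Sigma} = \matr{\Sigma}$, which in factored form reads $\matr{L} \matr{L}^\prime \matr{B} \matr{L} \matr{L}^\prime = \matr{L} \matr{L}^\prime$. Because $\matr{L}$ has full column rank, $\matr{L}^\prime \matr{L}$ is invertible; multiplying on the left by $(\matr{L}^\prime \matr{L})^{-1} \matr{L}^\prime$ and on the right by $\matr{L}(\matr{L}^\prime \matr{L})^{-1}$ collapses the identity to $\matr{L}^\prime \matr{B} \matr{L} = \matr{I}_r$. Consequently $\vect{V}^\prime \matr{B} \vect{V} \stackrel{d}{=} \vect{W}^\prime \vect{W}$, which is exactly $\chi^2(r) = \chi^2(\Rnk{\matr{\Sigma}})$, completing the argument.

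The main obstacle is the last step: one must verify that the seemingly weak reflexivity condition $\matr{\Sigma} \matr{B} \matr{\Sigma} = \matr{\Sigma}$ is strong enough to force $\matr{L}^\prime \matr{B} \matr{L}$ to be exactly the identity on $\mathbb{R}^r$ (rather than merely an idempotent of rank $r$), so that the limiting quadratic form does not depend on which generalized inverse is used. The full-column-rank factorization through the spectral theorem is what makes this possible; an alternative would be to pass to the Moore--Penrose inverse via $\matr{L}^+ = (\matr{L}^\prime \matr{L})^{-1}\matr{L}^\prime$ and check that the two formulations agree on the image of $\matr{\Sigma}$, which is where $\vect{V}$ lives almost surely.
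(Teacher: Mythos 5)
Your argument is correct. The paper itself gives no proof of this proposition: it is imported verbatim as Theorem~2 of \cite{Moore1977}, so there is nothing internal to compare against; your write-up is essentially the standard proof of the generalized Wald theorem found in that literature. The two essential points are both handled properly: (i) the reduction via Slutsky and the continuous mapping theorem (convergence of $\matr{B}_m$ to the \emph{constant} matrix $\matr{B}$ is what licenses the joint convergence of the pair, and the map $(\vect{v},\matr{M})\mapsto \vect{v}^\prime\matr{M}\vect{v}$ is jointly continuous), and (ii) the algebraic identity $\matr{L}^\prime\matr{B}\matr{L}=\matr{I}_r$, which you actually verify rather than merely flag as "the main obstacle": writing $\matr{M}=\matr{L}^\prime\matr{B}\matr{L}$, the relation $\matr{L}\matr{M}\matr{L}^\prime=\matr{L}\matr{L}^\prime$ collapses to $\matr{M}=\matr{I}_r$ after cancelling the full-column-rank factor $\matr{L}$ on each side, so the limit law is $\chi^2(\Rnk{\matr{\Sigma}})$ independently of which $\{1\}$-inverse $\matr{B}$ is used, and the possible asymmetry of $\matr{B}$ is moot since $\matr{L}^\prime\matr{B}\matr{L}$ turns out to be the identity. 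The only cosmetic imprecision is calling the convergence of $\matr{B}_m$ ``deterministic'': in the paper's applications $\matr{B}_m$ is a random (estimated) matrix converging in probability to $\matr{B}$, but Slutsky's lemma covers exactly that case, so nothing breaks.
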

Since the entries of
\begin{equation*}
\hat{\matr{\Psi}}_m = \sum_{i=1}^k c_i \Tmat{\hat{\vect{x}}_{(i)n_i}}  \Smat{\hat{\vect{x}}_{in_i}} \Tmat{\hat{\vect{x}}_{(i)n_i}}^\prime
\end{equation*}
are continuous functions of $\vect{x}_1, \ldots, \vect{x}_k$, whose consistent estimator are $\hat{\vect{x}}_{1n_1}, \ldots, \hat{\vect{x}}_{kn_k}$ respectively, then $\hat{\matr{\Psi}}_m$ is a consistent estimator of $\matr{\Psi}$ and similarly
\begin{equation*}
\hat{\matr{\Xi}}_m =  \sum_{i=1}^h c_{k+i} \Tmat{\hat{\vect{y}}_{(i)n_{k+i}}}  \Smat{\hat{\vect{y}}_{in_{k+i}}} \Tmat{\hat{\vect{y}}_{(i)n_{k+i}}}^\prime
\end{equation*}
for $\matr{\Xi}$.

Note that Proposition \ref{thm:moore} cannot be directly applied
to \eqref{v_n} by setting $\matr{B}_m = \hat{\matr{\Psi}}_m^+$, as
$\hat{\matr{\Psi}}_m^+$ may not be a consistent estimator of
$\matr{\Psi}^+$. Given a sequence of consistent estimators
$\{\matr{A}_m\}_{m \geq 1}$ for a matrix $\matr{A}$ of finite
dimensions, then $\{\matr{A}^+_m\}_{m\geq1}$ is a sequence of
consistent estimators for $\matr{A}^+$ if and only if $\Rnk{\matr{A}_m}
= \Rnk{\matr{A}}$ for $m$ large \citep{Nashed1976:325}.  In particular,
as the rank is a lower-semicontinuous operator on the space of
finite dimensional matrices, then only
$\operatorname{rk}(\hat{\matr{\Psi}}_m) \geq \Rnk{\matr{\Psi}}$ is
guaranteed as $m$ tends to infinity.

If the limiting rank is known, consistency is ensured by Eckart-Young-Mirsky's theorem \citep{Eckart1936}, which is the solution to the basic low rank approximation of a finite dimensional matrix \citep{Markovsky2012}.
To this end, given any $d$-dimensional symmetric matrix $\matr{A} \in \mathbb{R}^{d} \times \mathbb{R}^{d}$ with $d\in \mathbb{N}$ and its eigendecomposition $\matr{A} = \matr{P}^\prime \matr{\Lambda} \matr{P}$, with $0< \Rnk{\matr{A}} \leq d$, $\matr{\Lambda}$ diagonal matrix of the decreasing eigenvalues and $\matr{P}$ orthogonal matrix, then for any $0< r \leq \Rnk{\matr{A}}$ we define a rank-$r$ matrix that approximates $\matr{A}$ (in light of the Eckart-Young-Mirsky theorem) as
$\matr{A}^r = ( \matr{D}^r \matr{P})^\prime \matr{\Lambda}^r  \matr{D}^r \matr{P} \in \mathbb{R}^{d} \times \mathbb{R}^{d}$,
where $\matr{D}^r$ is a $\mathbb{R}^r \times \mathbb{R}^{d}$ matrix with $1$ at the diagonal and $0$ elsewhere and $\matr{\Lambda}^r$ is the $\mathbb{R}^r \times \mathbb{R}^r$ diagonal matrix of the largest $r$ eigenvalues of $\matr{\Sigma}$. In particular, $\matr{A}^r$ may not be unique, as for the case when the $r^{\text{th}}$ and $r+1^{\text{th}}$ eigenvalues are equal.
The following result is found in \citet[Theorem 2.3]{Hadi1990} for the generalized inverses and we report it here for the case of Moore-Penrose inverses.
\begin{proposition}[Rank approximation; {\citet{Hadi1990}, Theorem 2.3}]\label{thm:rkred}
	Suppose a sequence of centered random variables $\{\vect{U}_m\}_{m\geq1} \in \mathbb{R}^{d}$ is asymptotically distributed as $\mathcal{N}(\matr{\Sigma})$ for $m$ large, with $0 < \Rnk{\matr{\Sigma}} \leq d$ where $d>0$.
	Let $\{\hat{\matr{\Sigma}}_m\}_{m\geq1}$ be a sequence of square matrices that are consistent estimators of $\matr{\Sigma}$, then for every $0 < r \leq \Rnk{\matr{\Sigma}}$ 
	\begin{equation*}
	\vect{U}_m^\prime (\hat{\matr{\Sigma}}_m^r)^+ \vect{U}_m \Asy{m}{\infty} \chi^2(r),
	\end{equation*}
	where $\hat{\matr{\Sigma}}_m^r$ is a rank-$r$ approximation of $\hat{\matr{\Sigma}}_m$.
\end{proposition}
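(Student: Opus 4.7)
The plan is to carry out the limit distribution calculation directly in the eigenbasis of $\matr{\Sigma}$, and then to transfer it to the estimated matrix via consistency of the Moore-Penrose inverse and Slutsky's theorem. The key observation is that, since $r \leq \Rnk{\matr{\Sigma}}$, the top $r$ eigenvalues $\lambda_1, \ldots, \lambda_r$ of $\matr{\Sigma}$ are strictly positive, so that $\matr{\Sigma}^r = \matr{P}_r^\prime \matr{\Lambda}^r \matr{P}_r$ (with $\matr{P}_r = \matr{D}^r \matr{P}$) has rank exactly $r$ and admits the closed-form Moore-Penrose inverse $(\matr{\Sigma}^r)^+ = \matr{P}_r^\prime (\matr{\Lambda}^r)^{-1} \matr{P}_r$. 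Note that Proposition \ref{thm:moore} is not directly applicable, because $(\matr{\Sigma}^r)^+$ is in general \emph{not} a generalized inverse of $\matr{\Sigma}$ when $r < \Rnk{\matr{\Sigma}}$, so a self-contained spectral argument is needed.

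First, I would identify the candidate limit distribution. Representing any $\mathcal{N}(\matr{\Sigma})$ random vector as $\vect{U} = \matr{P}_\rho^\prime \matr{\Lambda}_\rho^{1/2} \vect{Z}$ with $\rho = \Rnk{\matr{\Sigma}}$, $\matr{P}_\rho = \matr{D}^\rho \matr{P}$ and $\vect{Z} \sim \mathcal{N}(0, \matr{I}_\rho)$, the orthonormality of $\matr{P}$ together with $r \leq \rho$ give $\matr{P}_r \matr{P}_\rho^\prime = [\matr{I}_r \mid \matr{0}]$. Substituting into $\vect{U}^\prime (\matr{\Sigma}^r)^+ \vect{U}$ and using $\matr{\Lambda}_r^{1/2} (\matr{\Lambda}^r)^{-1} \matr{\Lambda}_r^{1/2} = \matr{I}_r$ collapses the quadratic form to $\sum_{i=1}^r Z_i^2 \sim \chi^2(r)$.

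Second, I would show that $(\hat{\matr{\Sigma}}_m^r)^+$ is consistent for $(\matr{\Sigma}^r)^+$. Consistency of $\hat{\matr{\Sigma}}_m$ for $\matr{\Sigma}$ yields eigenvalue convergence via Weyl's inequality and convergence of the top-$r$ spectral projector via a Davis-Kahan bound, so $\hat{\matr{\Sigma}}_m^r \to \matr{\Sigma}^r$ in matrix norm. Because both matrices have rank exactly $r$, the Nashed criterion cited immediately before the statement promotes this to $(\hat{\matr{\Sigma}}_m^r)^+ \to (\matr{\Sigma}^r)^+$ in probability. Slutsky's theorem combined with the continuous mapping theorem applied to $(\vect{u}, \matr{M}) \mapsto \vect{u}^\prime \matr{M} \vect{u}$ then delivers $\vect{U}_m^\prime (\hat{\matr{\Sigma}}_m^r)^+ \vect{U}_m \Asy{m}{\infty} \chi^2(r)$.

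The hard part is the consistency step when $\lambda_r = \lambda_{r+1}$, where $\matr{\Sigma}^r$ is not uniquely defined, so there is no single target towards which Davis-Kahan can push. The way out is that the first-step calculation is invariant under any orthogonal rotation within an eigenspace of repeated eigenvalues, so every admissible rank-$r$ approximation of $\matr{\Sigma}$ produces the same value of the quadratic form on $\vect{U}$; correspondingly, any measurable selection of top $r$ eigenvectors of $\hat{\matr{\Sigma}}_m$ generates a rank-$r$ approximant whose distance to the (non-empty, compact) set of admissible $\matr{\Sigma}^r$ tends to zero, which is enough for Slutsky to carry the $\chi^2(r)$ conclusion across.
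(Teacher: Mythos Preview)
The paper does not supply its own proof of this proposition; it is quoted verbatim from \citet[Theorem~2.3]{Hadi1990} and used as a black box. Consequently there is nothing in the paper to compare your argument against.

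That said, your proof sketch is essentially the standard one and is correct in the generic case $\lambda_r>\lambda_{r+1}$: the spectral computation in step~1 is accurate, the observation that $(\matr{\Sigma}^r)^+$ is \emph{not} a generalized inverse of $\matr{\Sigma}$ when $r<\Rnk{\matr{\Sigma}}$ (so Proposition~\ref{thm:moore} does not apply directly) is a good point, and the Weyl/Davis--Kahan/Nashed/Slutsky chain in step~2 is the right route.

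There is, however, a slip in your treatment of the degenerate case $\lambda_r=\lambda_{r+1}$. Different admissible rank-$r$ approximations of $\matr{\Sigma}$ do \emph{not} produce the same \emph{value} of $\vect{U}^\prime(\matr{\Sigma}^r)^+\vect{U}$ on a fixed realization of $\vect{U}$; they only produce the same \emph{distribution}, namely $\chi^2(r)$. (With $\lambda_r=\lambda_{r+1}$, choosing different unit vectors $q$ in that eigenspace changes $(q^\prime\vect{U})^2$ pathwise, even though $q^\prime\vect{U}\sim\mathcal{N}(0,\lambda_r)$ for every such $q$.) As a consequence, your final sentence---that distance of $\hat{\matr{\Sigma}}_m^r$ to the \emph{set} of admissible $\matr{\Sigma}^r$ going to zero ``is enough for Slutsky''---is not a valid application of Slutsky's theorem, which requires convergence to a single deterministic limit, not to a compact set of limits. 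The conclusion can still be rescued, for instance by a subsequence argument (every subsequence admits a further subsequence along which, after a Skorokhod representation, $\hat{\matr{\Sigma}}_m^r$ converges a.s.\ to some particular admissible $\matr{\Sigma}^r$, and the limiting law is $\chi^2(r)$ regardless of which one), but this step needs to be spelled out more carefully than you have done.
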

Proposition \ref{thm:rkred} highlights the central role of the rank of $\matr{\Sigma}$, which will be derived in the next section for $\matr{\Sigma} = \matr{\Psi}$ and $\matr{\Sigma} = \matr{\Psi} + \matr{\Xi}$. In general, the determination of $\Rnk{\matr{\Sigma}}$ may be a difficult problem that depends on the structure of the $\matr{\Sigma}$ under consideration, and this limitation may explain why an otherwise flexible tool such as the generalized Wald's method from Proposition $\ref{thm:moore}$ is not more widely employed.
But, if the rank is known, Proposition \ref{thm:rkred} provides a method for statistical testing null hypotheses, such as $\mathbf{H}_0\colon \, \vect{x}_1 \ast \ldots \ast \vect{x}_k = \vect{z}$ and $\mathbf{H}_0\colon \, \vect{x}_1 \ast \ldots \ast \vect{x}_k = \vect{y}_1 \ast \ldots \ast \vect{y}_h$, under which $\matr{\Sigma}$ is not invertible.
This result also assures a solution if only a lower bound of the rank is given, at the cost of statistical power.
Furthermore, the exclusion of smaller eigenvalues may still be necessary to achieve numerical stability when calculating the pseudo-inverse of $\hat{\matr{\Sigma}}_m$, as due to Proposition \ref{thm:rkred}, the effect of that truncation can be accounted for in the statistic formulation.
\vskip 3mm

\noindent 3. DETERMINING THE COVARIANCE MATRIX RANK

In this section, we investigate the rank of
$\matr{\Psi}$ and $\matr{\Psi} + \matr{\Xi}$, the covariance matrices from \eqref{v_n} and \eqref{w_n} of Proposition \ref{proposition:asy}, in order to derive the number of degrees of freedom from the limiting statistics for the goodness-of-fit  \eqref{asy_chi:gof} and equality in distribution \eqref{asy_chi:tfh} tests.
Focusing on $\matr{\Psi}$, we begin by showing that $c_i \Tmat{\vect{x}_{(i)}} \Smat{\vect{x}_i} \Tmat{\vect{x}_{(i)}}^\prime$ is a positive semidefinite matrix for any fixed $i \in \{1, \ldots, k\}$. In fact, since $\Smat{\vect{x}_i}$ is positive semidefinite, for every $\vect{v} \in \mathbb{R}^{s+1}$
\begin{equation}\label{TST_psd}
c_i \vect{v}^\prime \Tmat{\vect{x}_{(i)}} \Smat{\vect{x}_i} \Tmat{\vect{x}_{(i)}}^\prime \vect{v} = c_i \vect{w}^\prime \Smat{\vect{x}_i} \vect{w} \geq 0,
\end{equation}
where $\vect{w} = \Tmat{\vect{x}_{(i)}}^\prime \vect{v}$.
Additionally, we deduce from \citet[Observation 7.1.3]{HornJohnson1986} that given $\matr{A}$ and $\matr{B}$ two positive semidefinite matrices with the same dimensions, then
\begin{equation}\label{prop_sum_psd_mat}
\Ker{\matr{A}+\matr{B}} = \Ker{\matr{A}} \cap \Ker{\matr{B}}.
\end{equation}
Taken together, \eqref{TST_psd} and \eqref{prop_sum_psd_mat}
imply
\begin{equation}\label{step:ker_psi}
\Ker{\matr{\Psi}} = \bigcap_{i=1}^k \Ker{\Tmat{\vect{x}_{(i)}}  \Smat{\vect{x}_i} \Tmat{\vect{x}_{(i)}}^\prime}.
\end{equation}
Using kernel properties, we write
\begin{equation}\label{ker_STi1}
\Ker{\Tmat{\vect{x}_{(i)}} \Smat{\vect{x}_i} \Tmat{\vect{x}_{(i)}}^\prime}
= \Ker{\Tmat{\vect{x}_{(i)}}^\prime} \oplus \{\vect{v} \in \mathbb{R}^{s+1} \colon \Tmat{\vect{x}_{(i)}}^\prime \vect{v} \in \Ker{\Smat{\vect{x}_i}} \}
\end{equation}
where
$\oplus$ represents the direct sum operation.

Let $i \in \{1, \ldots, k\}$ be fixed and let $L_i = \{l \colon x_{il}=0\}$ be the set of indexes of the null entries of $\vect{x}_i \in \Delta^{r_i}$.
In general, the kernel of $\Smat{\vect{x}_i}$ is generated by the $r_{i}\!+\!1$-dimensional all-ones vector $\vect{1}_{r_{i}}$ and the canonical vectors $\vect{e}^i_l = (e^i_{l0}, \ldots, e^i_{lr_i}) \in \mathbb{R}^{r_i + 1}$ for every $l \in L_i$, where $e^i_{lu} = \delta_{l,u}$ for $u=0, \ldots, r_i$, that is
$\Ker{\Smat{\vect{x}_i}} = \langle \vect{1}_{r_i} \rangle \oplus E_i$,
where $E_i = \langle \{\vect{e}^i_l \colon l \in L_i\}\rangle$.

Denoting $r_{(i)} = \sum_{j\neq i}^k r_j$, we can expand $\Tmat{\vect{x}_{(i)}}$ into
\begin{equation*}
\Tmat{\vect{x}_{(i)}} = 
\begin{bmatrix}
x_{(i)0} & 0 & \dots  & 0 \\
x_{(i)1} & x_{(i)0} & \ddots & \vdots \\
\vdots & \vdots & \ddots  & 0 \\
x_{(i)r_{(i)}} & x_{(i)r_{(i)}-1} &\ddots  & x_{(i)0}\\
0 & x_{(i)r_{(i)}} & \ddots  & \vdots \\
\vdots & \ddots & \ddots  & \vdots \\
0 & \dots  & 0 & x_{(i)r_{(i)}}
\end{bmatrix},
\end{equation*}
from which we deduce
$\Tmat{\vect{x}_{(i)}}^\prime \vect{1}_{s} = \vect{1}_{r_{i}}$,
for every $\vect{x}_{(i)} \in \Delta^{r_{(i)}}$, and in particular $\vect{1}_{s} \notin \Ker{\Tmat{\vect{x}_{(i)}}^\prime}$.

To achieve an explicit formulation for the rank of $\matr{\Psi}$ (and analogously for $\matr{\Psi} + \matr{\Xi}$), in Theorem \ref{thm:rnk} we will assume that $\vect{x}_i \in \Delta^{r_i}_{\text{Int}} \subset \Delta^{r_i}$, defined as
\begin{equation*}
\Delta^{r_i}_{\text{Int}} = \{\vect{v} = (v_0, \ldots, v_{r_i}) \in \mathbb{R}^{r_i + 1} \colon \sum_{j=1}^{r_i} v_j = 1; 0<v_j <1, j=0, \ldots, r_i \}
\end{equation*}
for every $i=1, \ldots, k$. This ensures that $E_i = \emptyset$, so that
$\Ker{\Smat{\vect{x}_i}} = \langle \vect{1}_{r_i} \rangle$.
Under this hypothesis, from \eqref{step:ker_psi} and \eqref{ker_STi1} we deduce that
\begin{equation}\label{ob:kerpsi}
\Ker{\matr{\Psi}} = \bigcap_{i=1}^k \Ker{\Tmat{\vect{x}_{(i)}}  \Smat{\vect{x}_i} \Tmat{\vect{x}_{(i)}}^\prime} = \langle \vect{1}_{s} \rangle \oplus \bigcap_{i=1}^k \Ker{\Tmat{\vect{x}_{(i)}}^\prime},
\end{equation}
and, with the same reasoning applied to $\matr{\Psi} + \matr{\Xi}$, it follows that
\begin{equation}\label{ob:kerpsixi}
\Ker{\matr{\Psi}+ \matr{\Xi}} = \langle \vect{1}_{s} \rangle \oplus \bigg( \big( \bigcap_{i=1}^{k} \Ker{\Tmat{\vect{x}_{(i)}}^\prime} \big) \cap \big( \bigcap_{j=1}^{h} \Ker{\Tmat{\vect{y}_{(j)}}^\prime} \big) \bigg).
\end{equation}

In the following Lemma we show how
\begin{equation}\label{kernel_cap_x}
\bigcap_{i=1}^k \Ker{\Tmat{\vect{x}_{(i)}}^\prime}
\end{equation}
and
\begin{equation}\label{kernel_cap_xy}
\bigg(\bigcap_{i=1}^{k} \Ker{\Tmat{\vect{x}_{(i)}}^\prime} \bigg) \cap \bigg(\bigcap_{j=1}^{h} \Ker{\Tmat{\vect{y}_{(j)}}^\prime} \bigg)
\end{equation}
depend on the roots in common between the probability generating functions of the random variables $X_1, \ldots, X_k, Y_1, \ldots, Y_h$.
This result will be achieved in full generality without restrictions for the PMVs, that is with $\vect{x}_1 \in \Delta^{r_1}, \ldots, \vect{x}_k \in \Delta^{r_k}, \vect{y}_1 \in \Delta^{r_{k+1}}, \ldots, \vect{y}_h \in \Delta^{r_{k+h}}$.
We first provide some insight into this connection by considering the case $k=2$
\begin{equation*}
\Ker{\Tmat{\vect{x}_{(1)}}^\prime} \cap \Ker{\Tmat{\vect{x}_{(2)}}^\prime} = \Ker{\begin{bmatrix} \Tmat{\vect{x}_{2}} & \Tmat{\vect{x}_{1}} \end{bmatrix}^\prime}.
\end{equation*}
In fact, $\begin{bmatrix} \Tmat{\vect{x}_{2}} & \Tmat{\vect{x}_{1}} \end{bmatrix} \in \mathbb{R}^{r_1 + r_2 + 2} \times \mathbb{R}^{r_1 + r_2 + 1}$ has the same structure, with different dimensions, of a Sylvester matrix \citep{Markovsky2012}, whose nullity is the degree of the polynomial from the greatest common divisor of the probability generating functions associated to $\vect{x}_1$ and $\vect{x}_2$.

To formalize the connection with PMVs and polynomials,
we introduce the bijection $\varphi\colon \cup_{a\geq0} \{\vect{u} = (u_0, \ldots, u_a)\in \mathbb{R}^{a+1} \colon \sum_{i=0}^{a}u_i = 1; u_a \neq 0\} \rightarrow \{ u(t)\in \mathbb{R}[t] \colon u(1)=1\}$ that, for any $a \geq 0$, maps a vector $\vect{v}=(v_0, \ldots, v_a) \in \{\vect{u}=(u_0, \ldots, u_a)\in \mathbb{R}^{a+1} \colon \allowbreak \sum_{i=0}^{a}u_i = 1; u_a \neq 0\}$ to the polynomial $\varphi(\vect{v})(t) = \sum_{i=0}^a v_i t^i \in \mathbb{R}[t]$ of degree $\deg\varphi(\vect{v}) = a$ with coefficients $\vect{v}$. In particular, this map transforms the convolution of vectors into the product of polynomials: given $\vect{w} \in \{\vect{u}=(u_0, \ldots, u_b)\in \mathbb{R}^{b+1} \colon \sum_{i=0}^{b} u_i = 1; u_b \neq 0\}$, for any $b\geq0$, we have
\begin{equation*}
\varphi(\vect{v} \ast \vect{w})(t) = \sum_{i=0}^a (\vect{v} \ast \vect{w})_i t^i = \varphi(\vect{v})(t)\varphi(\vect{w})(t).
\end{equation*}
The map $\varphi$ allows the extension of the notion of greatest common divisor between any two polynomials $\gcd(\varphi(\vect{v})$, $\varphi(\vect{w}))$ to their related vectors $\vect{v}$, $\vect{w}$. This is achieved by establishing $\gcd(u_1(t), u_2(t)) \in \{ u(t)\in \mathbb{R}[t] \colon u(1)=1\}$ for any $u_1(t), u_2(t) \in \{ u(t)\in \mathbb{R}[t] \colon u(1)=1\}$, so that the greatest common divisor is uniquely defined, and by setting, for any $\vect{v},\vect{w} \in \cup_{a\geq0} \{\vect{u}= (u_0, \ldots, u_a)\in \mathbb{R}^{a+1} \colon \sum_{i=0}^{a}u_i = 1; u_a \neq 0\}$,
\begin{equation*}
\gcd(\vect{v},\vect{w}) = \varphi^{-1}(\gcd(\varphi(\vect{v})(t), \varphi(\vect{w})(t)))
\in \{\vect{u}= (u_0, \ldots, u_{r_g})\in \mathbb{R}^{r_g+1} \colon \sum_{i=0}^{r_g}u_i = 1; u_{r_g} \neq 0\}
\end{equation*}
with $r_g = \deg\gcd(\varphi(\vect{v})(t), \varphi(\vect{w})(t)) \geq 0$.
In particular, we say the vectors $\vect{v}$, $\vect{w}$ are coprime if and only if $r_g = 0$. Following the same logic, we import the concept of least common multiple between $\vect{v}$ and $\vect{w}$, denoted $\lcm(\vect{v},\vect{w})$, and the property of divisibility between vectors.
Of note, with the notation above, the probability generating functions of $\vect{x}_1, \ldots, \vect{x}_k, \vect{y}_1, \ldots, \vect{y}_h$ are, respectively, $\varphi(\vect{x}_1), \ldots, \varphi(\vect{x}_k), \varphi(\vect{y}_1), \ldots, \varphi(\vect{y}_h)$.

We now establish the relation between the kernels of \eqref{kernel_cap_x}, \eqref{kernel_cap_xy} and the greatest common divisors $\vect{g}_k = \gcd(\vect{x}_{(1)}, \ldots, \vect{x}_{(k)})$ and $\bar{\vect{g}}_h = \gcd(\vect{y}_{(1)}, \ldots, \vect{y}_{(h)})$.

\begin{lemma}[Kernels from gcd of PMVs]\label{lemma:rnk}
	Let $k\geq 2$ and $\vect{x}_1 \in \Delta^{r_1}, \ldots, \vect{x}_k \in \Delta^{r_k}$.
	Given $\vect{g}_k = \gcd(\vect{x}_{(1)}, \ldots, \vect{x}_{(k)}) \in \mathbb{R}^{r_{g_k} +1}$, it holds that $\Tmat{\vect{g}_k}^\prime \in  \mathbb{R}^{\sum_{i=1}^k r_i - r_{g_k}+1} \times \mathbb{R}^{\sum_{i=1}^k r_i+1}$ and
	\begin{equation}\label{ker:psi}
	\bigcap_{i=1}^{k} \Ker{\Tmat{\vect{x}_{(i)}}^\prime} = \Ker{\Tmat{\vect{g}_k}^\prime}.
	\end{equation}
	Additionally, let $h \geq 1$ and $\vect{y}_1 \in \Delta^{r_{k+1}}, \ldots, \vect{y}_h \in \Delta^{r_{k+h}}$.
	Given $\bar{\vect{g}}_h = \gcd(\vect{y}_{(1)}, \ldots, \vect{y}_{(h)}) \in \mathbb{R}^{r_{\bar{g}_h} +1}$ and $\tilde{\vect{g}} = \gcd(\vect{g}_k, \bar{\vect{g}}_h) \!\! \in \! \mathbb{R}^{r_{\tilde{g}} +1}$, it holds that $\Tmat{\bar{\vect{g}}_h}^\prime \!\! \in \! \mathbb{R}^{\sum_{i=1}^k r_i - r_{\bar{g}_h}+1} \times \mathbb{R}^{\sum_{i=1}^k r_i+1}$, $\Tmat{\tilde{\vect{g}}}^\prime \!\! \in\! \mathbb{R}^{\sum_{i=1}^k r_i - r_{\tilde{g}}+1} \times \mathbb{R}^{\sum_{i=1}^k r_i+1}$ and
	\begin{equation}\label{ker:psi_xi}
	\bigcap_{i=1}^{k} \Ker{\Tmat{\vect{x}_{(i)}}^\prime} \cap \bigcap_{j=1}^{h} \Ker{\Tmat{\vect{y}_{(j)}}^\prime} = \Ker{\begin{bmatrix}\Tmat{\vect{g}_k}^\prime \\ \Tmat{\bar{\vect{g}}_h}^\prime \end{bmatrix}} = \Ker{\Tmat{\tilde{\vect{g}}}^\prime}
	\end{equation}
\end{lemma}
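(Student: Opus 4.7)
The plan is to translate both identities into equalities of column-space sums via $\Ker{M^\prime}=\Img{M}^\perp$, interpret those images as spaces of polynomial multiples of degree at most $s$ through the bijection $\varphi$, and reduce the nontrivial content to a Sylvester-type dimension count.

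For \eqref{ker:psi}, combining $\Ker{M^\prime}=\Img{M}^\perp$ with $\bigcap_i V_i^\perp=(\sum_i V_i)^\perp$ shows that the claim is equivalent to the subspace identity
\begin{equation*}
\sum_{i=1}^k \Img{\Tmat{\vect{x}_{(i)}}}=\Img{\Tmat{\vect{g}_k}}
\end{equation*}
inside $\mathbb{R}^{s+1}$. Under $\varphi$, $\Img{\Tmat{\vect{v}}}$ corresponds to the space of polynomials of degree at most $s$ divisible by $\varphi(\vect{v})$, so the inclusion $\subseteq$ is immediate from $\vect{g}_k\mid\vect{x}_{(i)}$ for every $i$. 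For the reverse inclusion I would proceed by induction on $k$. The base case $k=2$ amounts to showing that the block matrix $[\Tmat{\vect{x}_2}\;\Tmat{\vect{x}_1}]$, which has Sylvester-matrix structure as flagged in the paragraph preceding the lemma, has rank $s+1-r_{g_2}$ by the classical resultant theorem; combined with $\dim\Img{\Tmat{\vect{g}_2}}=s+1-r_{g_2}$ (full column rank of $\Tmat{\vect{g}_2}$, since $\vect{g}_2$ has nonzero first and last entries), matching dimensions force equality of subspaces. The induction step peels off the last factor: applying the hypothesis to $\{\vect{x}_{(i)}/\vect{x}_k\}_{i<k}$ identifies the gcd of the reduced family inside the smaller ambient space; reintroducing the common factor $\vect{x}_k$ by convolution returns the problem to degree $\le s$; and a final $k=2$ application closes the induction by associativity of $\gcd$.

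For \eqref{ker:psi_xi}, the kernel of a vertical concatenation equals the intersection of the individual kernels, and \eqref{ker:psi} applied separately to the $X$-family and the $Y$-family rewrites the left-hand side as $\Ker{\Tmat{\vect{g}_k}^\prime}\cap\Ker{\Tmat{\bar{\vect{g}}_h}^\prime}$, yielding the first equality. Dualizing once more, this intersection is $(\Img{\Tmat{\vect{g}_k}}+\Img{\Tmat{\bar{\vect{g}}_h}})^\perp$, and the $k=2$ case of the image-sum identity applied to $\vect{g}_k,\bar{\vect{g}}_h$ gives $\Img{\Tmat{\vect{g}_k}}+\Img{\Tmat{\bar{\vect{g}}_h}}=\Img{\Tmat{\tilde{\vect{g}}}}$, whence the second equality. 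The stated sizes of $\Tmat{\vect{g}_k}^\prime$, $\Tmat{\bar{\vect{g}}_h}^\prime$, $\Tmat{\tilde{\vect{g}}}^\prime$ read off directly from the definition of $\Tmat{\cdot}$ and the degrees $r_{g_k},r_{\bar{g}_h},r_{\tilde g}$.

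The main obstacle is the reverse inclusion in the image-sum identity: producing, for every degree-$\le s$ polynomial multiple $f$ of $g_k$, explicit $u_i$ with $\deg u_i\le r_i$ such that $f=\sum_i x_{(i)}u_i$. Without the degree cap this is just Bezout in the PID $\mathbb{R}[t]$; the content is in controlling degrees, which is what the Sylvester rank theorem delivers in the base case and what must be propagated carefully through the induction, keeping track of ambient dimensions as factors are folded in and out.
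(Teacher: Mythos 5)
Your argument is the paper's own proof viewed through the duality $\Ker{\matr{M}^\prime}=\Img{\matr{M}}^\perp$: the paper factors the stacked transposes through $\Tmat{\vect{g}_k}^\prime$ and shows the cofactor $\begin{bmatrix}\Tmat{\vect{z}_1} & \Tmat{\vect{z}_2}\end{bmatrix}^\prime$ has full column rank by characterizing the syzygies $\vect{z}_1\ast\vect{a}+\vect{z}_2\ast\vect{b}=\vect{0}$, which is exactly your Sylvester-type rank count for the base case, and it closes the induction (and the $\tilde{\vect{g}}$ claim) with the same row-versus-column inequality that your "$N\geq\deg\lcm$, since the lcm divides the full product" bookkeeping encodes. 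So the proposal is correct and essentially the same proof; the one step you flag but do not carry out — propagating the degree cap through the induction — is precisely the inequality $r_{g_{k+1}}+1\geq r_{g_k}$ (and $\sum_i r_i + r_{\tilde{g}}+1\geq r_{g_k}+r_{\bar{g}_h}$) that the paper verifies explicitly.
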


As consequence of Lemma \ref{lemma:rnk}, we can finally determine the rank of the covariance matrices $\matr{\Psi}$ and $\matr{\Psi}+\matr{\Xi}$ assuming $\vect{x}_1 \in \Delta^{r_{1}}_{\text{Int}}, \ldots, \vect{x}_k \in \Delta^{r_{k}}_{\text{Int}}, \vect{y}_1 \in \Delta^{r_{k+1}}_{\text{Int}}, \ldots, \vect{y}_h \in \Delta^{r_{k+h}}_{\text{Int}}$, in order to calculate the number of degrees of freedom of the limiting $\chi^2$ distribution in \eqref{asy_chi:gof} and \eqref{asy_chi:tfh} from Proposition \ref{proposition:asy} for this case.
\begin{theorem}[Covariance matrix rank]\label{thm:rnk}
	Under the assumptions of Proposition \ref{proposition:asy} and the notations of Lemma \ref{lemma:rnk}, and given $\vect{x}_1 \in \Delta^{r_{1}}_{\textnormal{Int}}, \ldots, \vect{x}_k \in \Delta^{r_{k}}_{\textnormal{Int}}, \vect{y}_1 \in \Delta^{r_{k+1}}_{\textnormal{Int}}, \ldots, \vect{y}_h \in \Delta^{r_{k+h}}_{\textnormal{Int}}$, it follows that
	\begin{equation}\label{thm_kerpsi}
	\Ker{\matr{\Psi}} = \langle \vect{1}_{s} \rangle \oplus \Ker{\Tmat{\vect{g}_k}^\prime}
	\end{equation}
	and
	\begin{equation}\label{thm_kerpsixi}
	\Ker{\matr{\Psi} + \matr{\Xi}} = \langle \vect{1}_{s} \rangle \oplus \Ker{\Tmat{\tilde{\vect{g}}}^\prime}.
	\end{equation}
	In particular, with $s$  defined immediately prior to equation \eqref{h0:tfh},
	\begin{equation}\label{rank_psi}
	\Rnk{\matr{\Psi}} = s - r_{g_k}
	\end{equation}
	and
	\begin{equation}\label{rank_psi_xi}
	\Rnk{\matr{\Psi}+\matr{\Xi}} = s -r_{\tilde{g}}.
	\end{equation}
\end{theorem}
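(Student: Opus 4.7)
The plan is to obtain the theorem as a direct assembly: the two kernel identities \eqref{thm_kerpsi} and \eqref{thm_kerpsixi} come from combining the kernel decompositions \eqref{ob:kerpsi} and \eqref{ob:kerpsixi} (already established in the text immediately preceding the statement, exploiting that under the $\Delta^{r_i}_{\text{Int}}$ assumption the set $E_i$ is empty so that $\Ker{\Smat{\vect{x}_i}}=\langle \vect{1}_{r_i}\rangle$) with the gcd identities \eqref{ker:psi} and \eqref{ker:psi_xi} from Lemma \ref{lemma:rnk}. Substituting the latter into the former yields the desired expressions for $\Ker{\matr{\Psi}}$ and $\Ker{\matr{\Psi}+\matr{\Xi}}$ at once.

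To move from kernels to ranks as in \eqref{rank_psi} and \eqref{rank_psi_xi}, I would first verify that the two summands in each direct sum are genuinely independent, i.e.\ that $\vect{1}_s\notin \Ker{\Tmat{\vect{g}_k}^\prime}$ and $\vect{1}_s\notin \Ker{\Tmat{\tilde{\vect{g}}}^\prime}$. This follows from the relation $\Tmat{\vect{v}}^\prime \vect{1}_s = \vect{1}_{r_v}$ (already used earlier in the text for the $\vect{x}_{(i)}$'s), which applies verbatim to $\vect{g}_k$ and $\tilde{\vect{g}}$ since by the normalization built into $\varphi^{-1}$ both have coefficients summing to one. Thus $\Tmat{\vect{g}_k}^\prime \vect{1}_s = \vect{1}_{s-r_{g_k}}\neq 0$ and the analogous statement for $\tilde{\vect{g}}$.

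Next I would compute $\dim \Ker{\Tmat{\vect{g}_k}^\prime}$ via rank-nullity. The key observation is that $\Tmat{\vect{g}_k}$ encodes convolution by $\vect{g}_k$, which under $\varphi$ becomes multiplication by the nonzero polynomial $\varphi(\vect{g}_k)(t)$. Since $\mathbb{R}[t]$ is an integral domain, this map is injective, so $\Tmat{\vect{g}_k}$ has full column rank $s-r_{g_k}+1$; equivalently, $\Rnk{\Tmat{\vect{g}_k}^\prime}=s-r_{g_k}+1$ and therefore $\dim \Ker{\Tmat{\vect{g}_k}^\prime}=(s+1)-(s-r_{g_k}+1)=r_{g_k}$. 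Combining with the direct-sum decomposition gives $\Nul{\matr{\Psi}}=1+r_{g_k}$, whence $\Rnk{\matr{\Psi}}=s-r_{g_k}$. The same argument applied to $\tilde{\vect{g}}$ gives $\Rnk{\matr{\Psi}+\matr{\Xi}}=s-r_{\tilde{g}}$.

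Most of the real work has already been front-loaded into Lemma \ref{lemma:rnk} and the derivations of \eqref{ob:kerpsi}--\eqref{ob:kerpsixi}, so I do not anticipate a hard step here. The only subtlety worth being careful about is the directness of the sum: one must not simply invoke $\vect{1}_s\neq 0$ but rather check that $\vect{1}_s$ is \emph{outside} $\Ker{\Tmat{\vect{g}_k}^\prime}$ (and similarly for $\tilde{\vect{g}}$), and that check hinges on the normalization $\varphi(\vect{g}_k)(1)=1$ inherent in the definition of $\gcd$ through $\varphi$.
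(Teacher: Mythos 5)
Your proposal is correct and follows essentially the same route as the paper's own (very terse) proof: apply Lemma \ref{lemma:rnk} to the kernel decompositions \eqref{ob:kerpsi} and \eqref{ob:kerpsixi}, then pass to ranks via rank--nullity. The details you add --- that $\Tmat{\vect{g}_k}$ has full column rank because convolution by a nonzero polynomial is injective, and that $\vect{1}_s\notin\Ker{\Tmat{\vect{g}_k}^\prime}$ by the normalization $\varphi(\vect{g}_k)(1)=1$ --- are exactly the steps the paper leaves implicit, and they are verified correctly.
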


In the case where $\vect{x}_1, \ldots, \vect{x}_k, \vect{y}_1, \ldots, \vect{y}_h$ are coprime, that is when their probability generating functions $\varphi(\vect{x}_1), \ldots, \varphi(\vect{x}_k), \varphi(\vect{y}_1), \ldots, \varphi(\vect{y}_h)$ have no root in common, we can simplify Theorem \ref{thm:rnk} as follows.
\begin{corollary}[Rank from the coprime case]\label{cor:coprime_rnk}
	Under the assumptions of Theorem \ref{thm:rnk}, if $\vect{x}_1,
	\ldots, \vect{x}_k, \vect{y}_1, \ldots, \vect{y}_h$ are
	coprime vectors, then $\Rnk{\matr{\Psi}} =
	\Rnk{\matr{\Psi}+\matr{\Xi}} = s$, where $s$  is defined
	immediately prior to equation \eqref{h0:tfh}.
\end{corollary}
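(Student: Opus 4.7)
The plan is to reduce everything to Theorem \ref{thm:rnk}: it suffices to show that the coprimality hypothesis forces $r_{g_k} = 0$ and $r_{\tilde{g}} = 0$, after which \eqref{rank_psi} and \eqref{rank_psi_xi} immediately yield $\Rnk{\matr{\Psi}} = \Rnk{\matr{\Psi}+\matr{\Xi}} = s$.

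First I would pass to the polynomial side via the bijection $\varphi$. Because $\varphi$ sends convolution to multiplication, $\varphi(\vect{x}_{(i)}) = \prod_{j \neq i} \varphi(\vect{x}_j)$, so $\vect{g}_k$ corresponds to $\gcd_{i=1,\ldots,k}\bigl(\prod_{j \neq i} \varphi(\vect{x}_j)\bigr) \in \mathbb{R}[t]$. I claim this polynomial has no complex roots. Suppose toward contradiction that $\alpha \in \mathbb{C}$ is a common root of every $\varphi(\vect{x}_{(i)})$. Then for each index $i$, at least one factor $\varphi(\vect{x}_j)$ with $j \neq i$ must vanish at $\alpha$. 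Pairwise coprimality of $\vect{x}_1, \ldots, \vect{x}_k$ allows at most one such index; call it $j_0$. But then $\varphi(\vect{x}_{(j_0)}) = \prod_{j \neq j_0} \varphi(\vect{x}_j)$ does not vanish at $\alpha$, contradicting the assumption. Hence the gcd polynomial is a nonzero constant, and the normalization $\varphi(\vect{g}_k)(1) = 1$ pins it to the constant $1$, giving $r_{g_k} = 0$.

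Applying the identical argument to $\vect{y}_1, \ldots, \vect{y}_h$ yields $r_{\bar{g}_h} = 0$, so $\tilde{\vect{g}} = \gcd(\vect{g}_k, \bar{\vect{g}}_h)$ has degree zero as well, i.e., $r_{\tilde{g}} = 0$. The conclusion then follows by direct substitution into \eqref{rank_psi} and \eqref{rank_psi_xi} of Theorem \ref{thm:rnk}.

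There is no substantive obstacle once the polynomial translation is in place. The only subtlety worth flagging is that ``coprime'' in the hypothesis must be read in the pairwise sense, matching the paper's definition for two vectors; pairwise coprimality is exactly what guarantees that a putative common root of the $\varphi(\vect{x}_{(i)})$'s can sit inside at most one factor $\varphi(\vect{x}_j)$, which is the pivot of the contradiction. A weaker reading (merely that the $k+h$ polynomials share no single common root) would not be enough — small examples with repeated factors show the $\vect{x}_{(i)}$'s can still share a root — so the proof plan hinges on this stronger reading being the intended one.
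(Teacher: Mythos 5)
Your proof is correct and takes essentially the same route as the paper's one-line argument (reduce to Theorem \ref{thm:rnk} by showing $r_{g_k} = r_{\tilde{g}} = 0$); the paper merely asserts $\Ker{\Tmat{\vect{g}_k}^\prime} = \Ker{\Tmat{\tilde{\vect{g}}}^\prime} = \{\vect{0}\}$ ``by coprimeness'', whereas you supply the contradiction argument that justifies it. Your closing caveat is also well taken: the paper's gloss of coprimeness as the generating functions having ``no root in common'' must indeed be read pairwise, since for instance $\varphi(\vect{x}_1)=\varphi(\vect{x}_2)=(1+t)/2$ and $\varphi(\vect{x}_3)=(2+t)/3$ share no root common to all three, yet every $\varphi(\vect{x}_{(i)})$ vanishes at $-1$, so $r_{g_3}\geq 1$ and the conclusion would fail under the weaker reading.
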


A statistic for the goodness-of-fit \eqref{h0:gof} and equality in distribution \eqref{h0:tfh} tests can also be built for the general case where $\vect{x}_i \in \Delta^{r_{i}}$ and $\vect{y}_i \in \Delta^{r_{i+k}}$, leveraging Proposition \ref{thm:rkred} and the following lower bound for the covariance matrix rank.
\begin{corollary}[Rank lower bound for the general case]\label{cor:general_rnk}
	Under the assumptions of Proposition \ref{proposition:asy} and the notations of Lemma \ref{lemma:rnk}, and given $\vect{x}_1 \in \Delta^{r_{1}}, \ldots, \vect{x}_k \in \Delta^{r_{k}}, \vect{y}_1 \in \Delta^{r_{k+1}}, \ldots, \vect{y}_h \in \Delta^{r_{k+h}}$,
	it follows that
	\begin{equation}\label{lbound_rank_psi}
	\Rnk{\matr{\Psi}} \geq s - r_{g_k} - \sum_{i=1}^{k} \vert L_i \vert
	\end{equation}
	and
	\begin{equation}\label{lbound_rank_psi_xi}
	\Rnk{\matr{\Psi}+\matr{\Xi}} \geq s - r_{\tilde{g}} - \sum_{i=1}^{k+h} \vert L_i \vert,
	\end{equation}
	where $L_i = \{l \colon x_{il}=0\}$ for $i=1, \ldots, k$, $L_{i+k} = \{l \colon y_{il}=0\}$ for $i=1, \ldots, h$, with $\vert L_i \vert$, $\vert L_{i+k} \vert$ indicating their respective cardinality, and $s$ is defined immediately prior to equation \eqref{h0:tfh}.
\end{corollary}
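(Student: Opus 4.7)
The plan is to bound the excess dimension of $\Ker{\matr{\Psi}}$ over its ``interior-case'' counterpart via a quotient-map argument, reducing \eqref{lbound_rank_psi} to the analysis already carried out for Theorem \ref{thm:rnk}. From \eqref{step:ker_psi}--\eqref{ker_STi1}, $\Ker{\matr{\Psi}} = \bigcap_{i=1}^{k} U_i$ with $U_i = \{\vect{v} \in \mathbb{R}^{s+1} \colon \Tmat{\vect{x}_{(i)}}^\prime \vect{v} \in \Ker{\Smat{\vect{x}_i}}\}$ and $\Ker{\Smat{\vect{x}_i}} = \langle \vect{1}_{r_i} \rangle \oplus E_i$. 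I would single out the ``interior part'' $U_i^0 = \{\vect{v} \colon \Tmat{\vect{x}_{(i)}}^\prime \vect{v} \in \langle \vect{1}_{r_i} \rangle\} \subseteq U_i$. The convolution matrix $\Tmat{\vect{x}_{(i)}}$ has full column rank (since $\vect{x}_{(i)} \neq 0$), so $\Tmat{\vect{x}_{(i)}}^\prime$ is surjective, and pulling back the decomposition $\Ker{\Smat{\vect{x}_i}} = \langle \vect{1}_{r_i} \rangle \oplus E_i$ yields $\dim(U_i/U_i^0) = \dim E_i = \vert L_i \vert$.

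The key step is then the linear map
\begin{equation*}
\pi \colon \bigcap_{i=1}^{k} U_i \longrightarrow \bigoplus_{i=1}^{k} U_i / U_i^0, \qquad \vect{v} \longmapsto (\vect{v} + U_i^0)_{i=1}^{k},
\end{equation*}
whose kernel is $\bigcap_{i=1}^{k} U_i^0$. The derivation of \eqref{ob:kerpsi} used only $\Tmat{\vect{x}_{(i)}}^\prime \vect{1}_s = \vect{1}_{r_i} \neq 0$ and therefore remains valid in the present setting, giving $\bigcap_i U_i^0 = \langle \vect{1}_s \rangle \oplus \bigcap_i \Ker{\Tmat{\vect{x}_{(i)}}^\prime}$, which by Lemma \ref{lemma:rnk} equals $\langle \vect{1}_s \rangle \oplus \Ker{\Tmat{\vect{g}_k}^\prime}$ of dimension $1 + r_{g_k}$. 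Applying rank-nullity to $\pi$ then delivers $\dim \Ker{\matr{\Psi}} \leq 1 + r_{g_k} + \sum_{i=1}^{k} \vert L_i \vert$, and \eqref{lbound_rank_psi} follows from $\Rnk{\matr{\Psi}} = s+1 - \dim \Ker{\matr{\Psi}}$.

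For \eqref{lbound_rank_psi_xi}, I would run the same argument with the intersection $\bigcap_{i=1}^k U_i$ enlarged by the analogous subspaces $U_j^Y$ built from $\vect{y}_j$ and $\vect{y}_{(j)}$ for $j=1,\ldots,h$, and the codomain of $\pi$ correspondingly extended; by \eqref{ob:kerpsixi} and the second part of Lemma \ref{lemma:rnk}, the kernel of this extended $\pi$ is $\langle \vect{1}_s \rangle \oplus \Ker{\Tmat{\tilde{\vect{g}}}^\prime}$ of dimension $1 + r_{\tilde g}$, yielding $\dim \Ker{\matr{\Psi}+\matr{\Xi}} \leq 1 + r_{\tilde g} + \sum_{i=1}^{k+h}\vert L_i \vert$. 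The most delicate point in the plan is the identity $\bigcap_i U_i^0 = \langle \vect{1}_s \rangle \oplus \bigcap_i \Ker{\Tmat{\vect{x}_{(i)}}^\prime}$ away from the interior regime: writing any $\vect{v} \in \bigcap_i U_i^0$ as $\vect{v} = \vect{k}_i + c_i \vect{1}_s$ for each $i$, one must observe that the scalars $c_i$ are forced to coincide---otherwise $\vect{1}_s$ would lie in some $\Ker{\Tmat{\vect{x}_{(i)}}^\prime}$, contradicting $\Tmat{\vect{x}_{(i)}}^\prime \vect{1}_s = \vect{1}_{r_i} \neq 0$. Once that algebraic point is settled, everything else is a routine dimension count.
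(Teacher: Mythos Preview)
Your approach is essentially the paper's: both bound $\Nul{\matr{\Psi}}$ by $1 + r_{g_k} + \sum_i |L_i|$ through the splitting $\Ker{\Smat{\vect{x}_i}} = \langle \vect{1}_{r_i}\rangle \oplus E_i$ together with Lemma~\ref{lemma:rnk}, and then handle $\matr{\Psi}+\matr{\Xi}$ identically. Your quotient map $\pi$ is a cleaner bookkeeping device than the paper's somewhat loose chain of ``$\oplus$'' inclusions, and you correctly single out the one substantive step, namely $\bigcap_i U_i^0 = \langle \vect{1}_s\rangle \oplus \bigcap_i \Ker{\Tmat{\vect{x}_{(i)}}^\prime}$, which the paper simply asserts as \eqref{ob:kerpsi}.

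One caution about your parenthetical justification of that step: the claim that $c_i \neq c_j$ would force $\vect{1}_s$ into \emph{a single} $\Ker{\Tmat{\vect{x}_{(i)}}^\prime}$ does not follow as written. From $(c_i-c_j)\vect{1}_s = \vect{k}_j - \vect{k}_i$ with $\vect{k}_i \in \Ker{\Tmat{\vect{x}_{(i)}}^\prime}$ and $\vect{k}_j \in \Ker{\Tmat{\vect{x}_{(j)}}^\prime}$ you only get $\vect{1}_s \in \Ker{\Tmat{\vect{x}_{(i)}}^\prime} + \Ker{\Tmat{\vect{x}_{(j)}}^\prime}$, which is no contradiction. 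Since the paper's own proof of Corollary~\ref{cor:general_rnk} just cites \eqref{ob:kerpsi} at this point without further argument, you are on the same footing if you do likewise; but if you wish to actually \emph{settle} the algebraic point, your sketch needs a different argument for $c_i = c_j$.
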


Taken together, Lemma \ref{lemma:rnk} and Theorem \ref{thm:rnk} serve as example of how to determine the covariance matrix rank upon application of the generalized Wald's framework.
For example, assuming the variables $\{X_{1j}, \ldots, X_{kj}\}$ for $j=1, \ldots, m$ are grouped in $m$ $k$-tuples (thus with $m = n_1 = \ldots = n_k$), each drawn independently from $(X_1, \ldots, X_k)$, a statistic can be built to test the null hypothesis that the random variables $X_1, \ldots, X_k$ are sub-independent \citep{Hamedani2013}, that is 
\begin{equation}\label{h0_sub_independence}
\mathbf{H}_0\colon \psi_{\sum_{i=1}^{k}X_i}(t) = \prod_{i=1}^{k} \psi_{X_i}(t), \quad \quad \text{for all} \quad t \in \mathbb{R},
\end{equation}
where $\psi_{X}$ represents the characteristic function of the random variable $X$.
Sub-independence is a property less stringent than independence, and both these properties imply uncorrelatedness. Moreover the assumption of sub-independence can replace that of independence in several limit theorems \citep{Hamedani2013,Schennach2019}.

\begin{corollary}[Test for sub-independence]\label{cor:sub_independence}
	Under the assumptions of Theorem \ref{thm:rnk} and the null hypothesis of sub-independence \eqref{h0_sub_independence}, such that $ \vect{x}_{1} \ast \ldots \ast \vect{x}_{k} = \vect{z}$, and given $\hat{\vect{z}}_m$ the MLE for the PMV $\vect{z} \in \Delta^{s}_{\text{Int}}$ such that $(\hat{\vect{z}}_{m})_u = \frac{1}{m} \sum_{j=1}^{m} \mathbbm{1}_{\{\sum_{i=1}^{k}X_{ij}=u\}}$ for $u=0, \ldots, s$,
	it holds that
	\begin{equation}\label{s_n}
	\vect{S}_m = \sqrt{m} \left( \hat{\vect{x}}_{1m} \ast \ldots \ast \hat{\vect{x}}_{km} - \hat{\vect{z}} \right) \Asy{m}{\infty} \mathcal{N}(\matr{\Upsilon}),
	\end{equation}
	where $\matr{\Upsilon} = \Smat{\vect{z}} - \matr{\Psi}$ and $\Rnk{\matr{\Upsilon}} = s$.
	
	In particular, given $\hat{\matr{\Upsilon}}_m = \Smat{\hat{\vect{z}}_m} - \hat{\matr{\Psi}}_m$, then
	\begin{equation}\label{asy_chi:sub_independence}
	\vect{S}_m^\prime (\hat{\matr{\Upsilon}}_m^{s})^+ \vect{S}_m \Asy{m}{\infty} \chi^2\left( s \right).
	\end{equation}
\end{corollary}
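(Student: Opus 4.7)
The strategy is to determine the asymptotic law of $\vect{S}_m$ via the delta method applied to a joint empirical statistic, identify the limiting covariance with $\matr{\Upsilon} = \Smat{\vect{z}} - \matr{\Psi}$, and then invoke Proposition \ref{thm:rkred}.

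First, I would observe that the stacked vector $\hat{\vect{P}}_m = (\hat{\vect{x}}_{1m}, \ldots, \hat{\vect{x}}_{km}, \hat{\vect{z}}_m)$ is the empirical mean of $m$ iid random vectors $\vect{W}_j$ obtained from the $k$-tuple $(X_{1j}, \ldots, X_{kj})$ by concatenating the indicators $\mathbbm{1}_{\{X_{ij}=l\}}$ (for each $i,l$) with the indicators $\mathbbm{1}_{\{\sum_i X_{ij}=u\}}$ (for each $u$). The multivariate central limit theorem then yields $\sqrt{m}(\hat{\vect{P}}_m - \vect{P}) \Asy{m}{\infty} \mathcal{N}(\matr{C})$ with $\matr{C} = \operatorname{Cov}(\vect{W}_1)$. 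Applying the delta method to the smooth map $g(\vect{x}_1, \ldots, \vect{x}_k, \vect{z}) = \vect{x}_1 \ast \ldots \ast \vect{x}_k - \vect{z}$, which vanishes at $\vect{P}$ under the null hypothesis, gives
\begin{equation*}
\vect{S}_m = \sqrt{m}\, g(\hat{\vect{P}}_m) \Asy{m}{\infty} \mathcal{N}(\matr{J}\matr{C}\matr{J}^\prime),
\end{equation*}
where $\matr{J} = [\Tmat{\vect{x}_{(1)}}\ \cdots\ \Tmat{\vect{x}_{(k)}}\ -\matr{I}]$ is the Jacobian of $g$ at $\vect{P}$.

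The core computation is to verify $\matr{J}\matr{C}\matr{J}^\prime = \Smat{\vect{z}} - \matr{\Psi}$. Expanding $\matr{J}\matr{C}\matr{J}^\prime$ into its block structure, this reduces to the cross-covariance identity $\Tmat{\vect{x}_{(i)}}\operatorname{Cov}(\mathbbm{1}_{\{X_i=\cdot\}},\mathbbm{1}_{\{\sum_j X_j=\cdot\}}) = \Tmat{\vect{x}_{(i)}}\Smat{\vect{x}_i}\Tmat{\vect{x}_{(i)}}^\prime$ for each $i = 1, \ldots, k$, which I would establish entrywise by a direct calculation invoking sub-independence through $\vect{z} = \vect{x}_1 \ast \ldots \ast \vect{x}_k$ and the definition of $\Smat{\vect{x}_i}$. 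For the rank claim, since both $\hat{\vect{x}}_{1m} \ast \ldots \ast \hat{\vect{x}}_{km}$ and $\hat{\vect{z}}_m$ lie in $\Delta^s$, the components of $\vect{S}_m$ sum to zero almost surely, so $\vect{1}_s \in \Ker{\matr{\Upsilon}}$ and $\Rnk{\matr{\Upsilon}} \leq s$; the strict interiority assumptions $\vect{x}_i \in \Delta^{r_i}_{\text{Int}}$ and $\vect{z} \in \Delta^s_{\text{Int}}$ guarantee that the support of $\matr{J}\vect{W}_1$ affinely spans an $s$-dimensional subspace of $\langle \vect{1}_s \rangle^\perp$, forcing $\Nul{\matr{\Upsilon}} = 1$ and hence $\Rnk{\matr{\Upsilon}} = s$.

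Finally, $\hat{\matr{\Upsilon}}_m = \Smat{\hat{\vect{z}}_m} - \hat{\matr{\Psi}}_m$ is a continuous function of the consistent estimators $\hat{\vect{x}}_{im}$ and $\hat{\vect{z}}_m$, so by the continuous mapping theorem it is consistent for $\matr{\Upsilon}$; Proposition \ref{thm:rkred} applied with $r = s$ then delivers \eqref{asy_chi:sub_independence}. The main obstacle is the cross-covariance identity that collapses $\matr{J}\matr{C}\matr{J}^\prime$ to $\Smat{\vect{z}} - \matr{\Psi}$, since this is where the null of sub-independence must interact with the block structure of $\matr{J}$ and the Toeplitz-like matrices $\Tmat{\vect{x}_{(i)}}$, requiring careful bookkeeping with the joint distribution of $(X_i, \sum_j X_j)$.
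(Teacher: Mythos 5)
Your treatment of the asymptotic normality \eqref{s_n} follows the paper's own route: stack the empirical PMVs together with $\hat{\vect{z}}_m$, apply the multivariate CLT and the delta method, and identify the limiting covariance through the cross-covariance blocks $\operatorname{Cov}(\mathbbm{1}_{\{X_i=\cdot\}},\mathbbm{1}_{\{\sum_j X_j=\cdot\}})$ --- these are the paper's matrices $\matr{A}_i$, for which it computes $\Tmat{\vect{x}_{(i)}}\matr{A}_i=\Tmat{\vect{x}_{(i)}}\Smat{\vect{x}_i}\Tmat{\vect{x}_{(i)}}^\prime$ exactly as you propose. One omission here: your claim that verifying $\matr{J}\matr{C}\matr{J}^\prime=\Smat{\vect{z}}-\matr{\Psi}$ ``reduces to'' that single identity silently discards the blocks $\operatorname{Cov}(\mathbbm{1}_{\{X_i=\cdot\}},\mathbbm{1}_{\{X_j=\cdot\}})$ for $i\neq j$; these must be argued to vanish (the paper does so explicitly, attributing it to the uncorrelatedness entailed by sub-independence), otherwise the double sum over $i,j$ does not collapse to $\matr{\Psi}$.

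The genuine gap is the rank claim. You assert that the interiority assumptions ``guarantee that the support of $\matr{J}\vect{W}_1$ affinely spans an $s$-dimensional subspace,'' but that assertion \emph{is} the statement $\Nul{\matr{\Upsilon}}=1$ and it is the delicate part of the corollary: $\matr{\Upsilon}=\Smat{\vect{z}}-\matr{\Psi}$ is a difference of two singular positive semidefinite matrices, and $\Ker{\matr{\Psi}}$ can be strictly larger than $\langle\vect{1}_{s}\rangle$ (Theorem \ref{thm:rnk} gives $\Ker{\matr{\Psi}}=\langle\vect{1}_{s}\rangle\oplus\Ker{\Tmat{\vect{g}_k}^\prime}$), so nothing about the kernel of the difference is automatic. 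Moreover, describing the support of $\matr{J}\vect{W}_1$ requires the joint law of $(X_1,\ldots,X_k)$, which sub-independence does not pin down, so this route gives you no concrete handle on which functionals $\vect{v}^\prime\matr{J}\vect{W}_1$ are almost surely constant. The paper closes this step algebraically: it factors $\matr{\Upsilon}=\matr{M}_1\matr{M}_2\matr{M}_3$ with $\matr{M}_1=\begin{bmatrix}\matr{I} & \Tmat{\vect{x}_{(1)}} & \cdots & \Tmat{\vect{x}_{(k)}}\end{bmatrix}$, $\matr{M}_2$ block-diagonal with blocks $\Smat{\vect{z}},\Smat{\vect{x}_1},\ldots,\Smat{\vect{x}_k}$, and $\matr{M}_3$ the corresponding signed transpose, shows $\Ker{\matr{M}_2\matr{M}_3}=\langle\vect{1}_{s}\rangle$, and uses the orthogonality of $\Ker{\matr{M}_1}$ to $\Img{\matr{M}_1^\prime}$ to exclude any further kernel directions. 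You need an argument of this kind to make the rank claim stick. The final step --- consistency of $\hat{\matr{\Upsilon}}_m$ plus Proposition \ref{thm:rkred} with $r=s$ --- is fine once the rank is established.
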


As the independence of random variables implies their sub-independence, Corollary \ref{cor:sub_independence} also provides a routine to test for complete independence in $k$-way contingency tables \citep{Andersen1974,Bishop2007} (one dimension per random variable $X_i$, $i=1, \ldots, k$), as an alternative to the power divergence family of statistics \citep{CressieRead1984}. In this instance, the null hypothesis of sub-independence, weaker than complete independence, leads to a reduction of free parameters to be estimated, from $\prod_{i=1}^{k} r_i$ (in the model from power divergence statistics) to $\sum_{i=1}^{k} r_i$ (in the framework of the convolution statistics), which is desirable when $k$ is large.
\vskip 3mm

\noindent 4. POWER COMPARISON

We evaluate the performances of the convolution test in terms of type I error and power ($1$ minus type II error), which are the proportion of rejections with significance level $\alpha = 0.05$ under the null and alternative hypothesis respectively, setting Pearson's $\chi^2$ test as the benchmark.
To do so, we simulate the smallest parametrized model that enables the investigation of how samples size, degrees of freedom reduction, and observables distribution affect the convolution test, using different parameters choices. It also allows the transition from the null to alternative hypotheses by modulating a single parameter.

We consider $k=2$, $h=1$ and $X_1, X_2$ are two Bernoulli random variables with parameters $p, q \in (0,1)$ so that $\vect{x}_1 = (1-p, p)$, $\vect{x}_2 = (1-q, q)$ and $\vect{x}_1, \vect{x}_2 \in \Delta^{1}_{\text{Int}}$.
Then we define, for $\rho \in [0,1]$,
\begin{equation}\label{z_rho}
\vect{z}(\rho) = (1-\rho) \vect{x}_1 \ast \vect{x}_2 + \rho (1-a, 0, a),
\end{equation}
where $a = p q + \sqrt{p q (1-p)(1-q)}$ is defined so that $\vect{z}(\rho) = (z(\rho)_0, z(\rho)_1, z(\rho)_2) \in \Delta^{2}_{\text{Int}}$ is the PMV for the distribution of $Z_1 + Z_2$ where $Z_1$ and $Z_2$ are two Bernoulli random variables with parameter $p$ and $q$, respectively, and $\rho$ is their correlation.
The null hypothesis for the goodness-of-fit (GF) test is
$\mathbf{H}_0: X_1 + X_2 \sim \vect{z}(0)$
and we set a family of alternative hypotheses parametrized over $\rho \in (0,1]$ as
$\mathbf{H}_1^\rho: X_1 + X_2 \nsim \vect{z}(\rho)$.
Similarly, the null and the alternative hypotheses of the test for equality in distribution (ED) are defined as
$\mathbf{H}_0\colon X_1 + X_2 \sim Y_1$ with $Y_1 \sim \vect{z}(0)$ and $\mathbf{H}_1^\rho\colon X_1 + X_2 \nsim Y_1$ with $Y_1 \sim \vect{z}(\rho)$, $\rho \in (0,1]$.

To facilitate the following discussion, we set the sample sizes $n_1, n_2$ and $n_3$ for $X_1$, $X_2$ and $Y_1$, respectively, so that $n_1,n_2 \leq n_3$ and $m=\min(n_1, n_2, n_3) = \min(n_1, n_2)$.
Since we are interested in the comparison between the convolution and Pearson's $\chi^2$ statistics, we need to calculate the latter even in the case of unequal sample size, i.e. when $n_1 \neq n_2$.
Thus, we define
\begin{equation*}
P_m^{\text{GF}} = \sum_{j=0}^{2} \frac{(\sum_{i=1}^{m}\mathbbm{1}_{\{X_{1i}+X_{2i}=j\}} - m z(\rho)_j)^2}{m z(\rho)_j} \quad \text{and}
\end{equation*}
\begin{equation*}
\begin{aligned}
P_m^{\text{ED}} =& \sum_{j=0}^{2} \Bigg( \frac{(\frac{n_3}{m+n_3} \sum_{i=1}^{m} \mathbbm{1}_{\{X_{1i}+X_{2i}=j\}} - \frac{m}{m+n_3} \sum_{i=1}^{n_3}\mathbbm{1}_{\{Y_i=j\}} )^2}{\frac{m}{m+n_3}( \sum_{i=1}^{m}\mathbbm{1}_{\{X_{1i}+X_{2i}=j\}}+\sum_{i=1}^{n_3}\mathbbm{1}_{\{Y_i=j\}})} \nonumber \\ 
&+ \frac{(\frac{m}{m+n_3} \sum_{i=1}^{n_3}\mathbbm{1}_{\{Y_i=j\}} - \frac{n_3}{m+n_3} \sum_{i=1}^{m} \mathbbm{1}_{\{X_{1i}+X_{2i}=j\}} )^2}{\frac{n_3}{m+n_3}( \sum_{i=1}^{m}\mathbbm{1}_{\{X_{1i}+X_{2i}=j\}}+\sum_{i=1}^{n_3}\mathbbm{1}_{\{Y_i=j\}})}\Bigg)
\end{aligned}
\end{equation*}
for Pearson's goodness-of-fit and equality in distribution testing statistic, respectively.
In particular, $n_1 + n_2 - 2m$ observations will not be used in the computation of $P_m^{\text{GF}}$ and $P_m^{\text{ED}}$.

We define the convolution statistic with fixed rank $r=1, 2$ from the notation in Proposition \ref{proposition:asy} and Proposition \ref{thm:rkred} as $\vect{V}_m^\prime (\hat{\matr{\Psi}}_m^r)^{+} \vect{V}_m$ and $\vect{W}_m^\prime ((\hat{\matr{\Psi}}_m+\hat{\matr{\Xi}}_m)^r)^{+} \vect{W}_m$.
In the case where the $n_1$ and $n_2$ observations from the random
variables $X_1$ and $X_2$, respectively, are all equal, the sample
covariance matrix is null, i.e. $\hat{\matr{\Psi}}_m = \matr{0}$,
and $(\hat{\matr{\Psi}}_m^r)^{+}$ is not well defined. In this
scenario, Pearson's $P_m^{\text{GF}}$ can still be calculated.
As we aim to compare the power gain over Pearson's procedures, we calculate the convolution statistics for goodness-of-fit test as $\vect{V}_m^\prime (\hat{\matr{\Psi}}_m^r)^{+} \vect{V}_m$, where well defined, otherwise we set it to $P_m^{\text{GF}}$. With the same reasoning for the equality in distribution case, for the following simulations we define the convolution statistic as
\begin{align*}
C_{rm}^{\text{GF}} &= \vect{V}_m^\prime (\hat{\matr{\Psi}}_m^r)^{+} \vect{V}_m (1-\mathbbm{1}_{\{\hat{\matr{\Psi}}_m = \matr{0}\}})+ P_m^{\text{GF}} \mathbbm{1}_{\{\hat{\matr{\Psi}}_m = \matr{0}\}} \quad \text{and}\\
C_{rm}^{\text{ED}} &= \vect{W}_m^\prime ((\hat{\matr{\Psi}}_m+\hat{\matr{\Xi}}_m)^r)^{+} \vect{W}_m (1-\mathbbm{1}_{\{\hat{\matr{\Psi}}_m + \hat{\matr{\Xi}}_m = \matr{0}\}}) + P_m^{\text{ED}} \mathbbm{1}_{\{\hat{\matr{\Psi}}_m + \hat{\matr{\Xi}}_m = \matr{0}\}},
\end{align*}
for goodness-of-fit and equality in distribution tests, respectively. Note that $\lim_{m \to \infty} C_{rm}^{\text{GF}}\allowbreak \sim \chi^2(r)$, since $\hat{\matr{\Psi}}_m= \matr{0}$ if and only if $\hat{\vect{x}}_{1n_1}$, $\hat{\vect{x}}_{2n_2} \in \{(1,0), (0,1)\}$, but, for $j=1, 2$, $\lim_{m \to \infty} \Pro{\hat{\vect{x}}_{jn_j} \in  \{(1,0), (0,1)\}} = 0$.
Analogously, also $\lim_{m \to \infty} C_{rm}^{\text{ED}} \sim \chi^2(r)$ holds true.

Moreover, in order not to confound the comparative analysis, we do
not reduce the limiting $\chi^2$ degrees of freedom in the case
where a positive eigenvalue of $\hat{\matr{\Psi}}_m$, or
$\hat{\matr{\Psi}}_m + \hat{\matr{\Xi}}_m$, is set to $0$ for being
smaller than $10^{-\epsilon}$ (here $\epsilon=15$ is the machine
precision from Python's floating point number in Numpy 1.13.1).

Finally, to assess whether deviations from the limit of the convolution statistic are due to the estimate of the covariance matrix pseudo-inverse, we introduce
\begin{equation*}
Z_{rm}^{\text{GF}} = \vect{V}_m^\prime \matr{\Psi}^{+} \vect{V}_m \quad \text{and} \quad
Z_{rm}^{\text{ED}} = \vect{W}_m^\prime (\matr{\Psi} + \matr{\Xi})^{+} \vect{W}_m
\end{equation*}
for $r=1,2$, which are the convolution statistics calculated with the true covariance matrices.

For all these statistics, we evaluate the proportion of hypothesis rejection for the significance level $\alpha =0.05$ by Monte Carlo approximation over $L=100,\!000$ independent instances of the data.
That is, the proportion of rejections for $U$ (assumed to be one between $P^{\text{GF}}_m$, $C^{\text{GF}}_{rm}$, $Z^{\text{GF}}_{rm}$, $P^{\text{ED}}_m$, $C^{\text{ED}}_{rm}$, $Z^{\text{ED}}_{rm}$) is calculated as
\begin{equation}\label{prop_reg:GF}
\frac{1}{L} \sum_{l=1}^{L} \mathbbm{1}_{\{ S_r(U_l) < \alpha \}},
\end{equation}
given $\{U_1, \ldots U_L\}$ are independent and identically distributed from $U$, and $S_r(t) = \Pro{\chi^2(r) \geq t}$ equal to the survival function for a $\chi^2$ distribution with $r$ degrees of freedom.  

In Fig. \ref{fig:h0_pow}, we report the statistical power under $\mathbf{H}_0$ and a range of alternative hypotheses $\mathbf{H}_1^\rho$. We implement these comparisons for small and large samples with respect to $m=\min(n_1, n_2, n_3)$, and with equal and unequal sizes.

To comply with the rule-of-thumb recommendation for Pearson's $\chi^2$ statistic application \citep{CressieRead1984}, the requirement for the expected frequencies $m (\vect{x}_1 \ast \vect{x}_2)_u \geq 1$ for the categories $u=0,1,2$, must be met when $m$ observations are sampled from the distribution of $X_1 + X_2$.
Thus, we select two cases for the parameters $(p, q)$ so that, under small samples $m=10$, all three constraints from the rule-of-thumb are satisfied when $(p, q) = (0.3, 0.8)$, while only one, i.e. $m(\vect{x}_1 \ast \vect{x}_2)_1 \geq 1$, holds
for $(p, q) = (0.1, 0.9)$.
These parameters were selected to check whether the convolution statistic offers a better alternative over Pearson's $\chi^2$, under cases favorable to $P^{\text{GF}}_{m}$ and $P^{\text{ED}}_{m}$, when $(p, q) = (0.3,0.8)$ and sample sizes are equal, or unfavorable, when $n_1 + n_2 - 2m > 0$ observations are excluded and the rule-of-thumb is violated.

For the small sample cases when $(p, q) = (0.3, 0.8)$, $C^{\text{GF}}_{2m}$ provides better power over $P^{\text{GF}}_m$, and the latter over $C^{\text{GF}}_{1m}$, but $C^{\text{GF}}_{2m}$ shows a proportion of rejections that is above $\alpha$ under $\mathbf{H}_0$ (Fig. \ref{fig:h0_pow}a, top left and middle left panels).
When $(p, q) = (0.1, 0.9)$, $C^{\text{GF}}_{1m}$ and $C^{\text{GF}}_{2m}$ have similar behavior which outperforms $P^{\text{GF}}_m$ (Fig. \ref{fig:h0_pow}a, top right and middle right panels).
Equivalent conclusions are inferred for the equality in distribution testing statistic counterparts (Fig. \ref{fig:h0_pow}b, top and middle panels).
For large samples, under $\mathbf{H}_0$, the proportion of rejections becomes closer to $\alpha$ for $C^{\text{GF}}_{2m}$ (Fig. \ref{fig:h0_pow}a, bottom panels) and it coincides for $C^{\text{ED}}_{2m}$ (Fig. \ref{fig:h0_pow}b, bottom panels); in terms of power, convolution statistics outperform Pearson's $\chi^2$, with the exception of $C^{\text{GF}}_{1m}$ and $C^{\text{ED}}_{1m}$ when $(p, q) = (0.3,0.8)$ (Fig. \ref{fig:h0_pow}a,b, bottom left panels).

\begin{figure}[t]
	\begin{center}
		\includegraphics[width=0.8\textwidth]{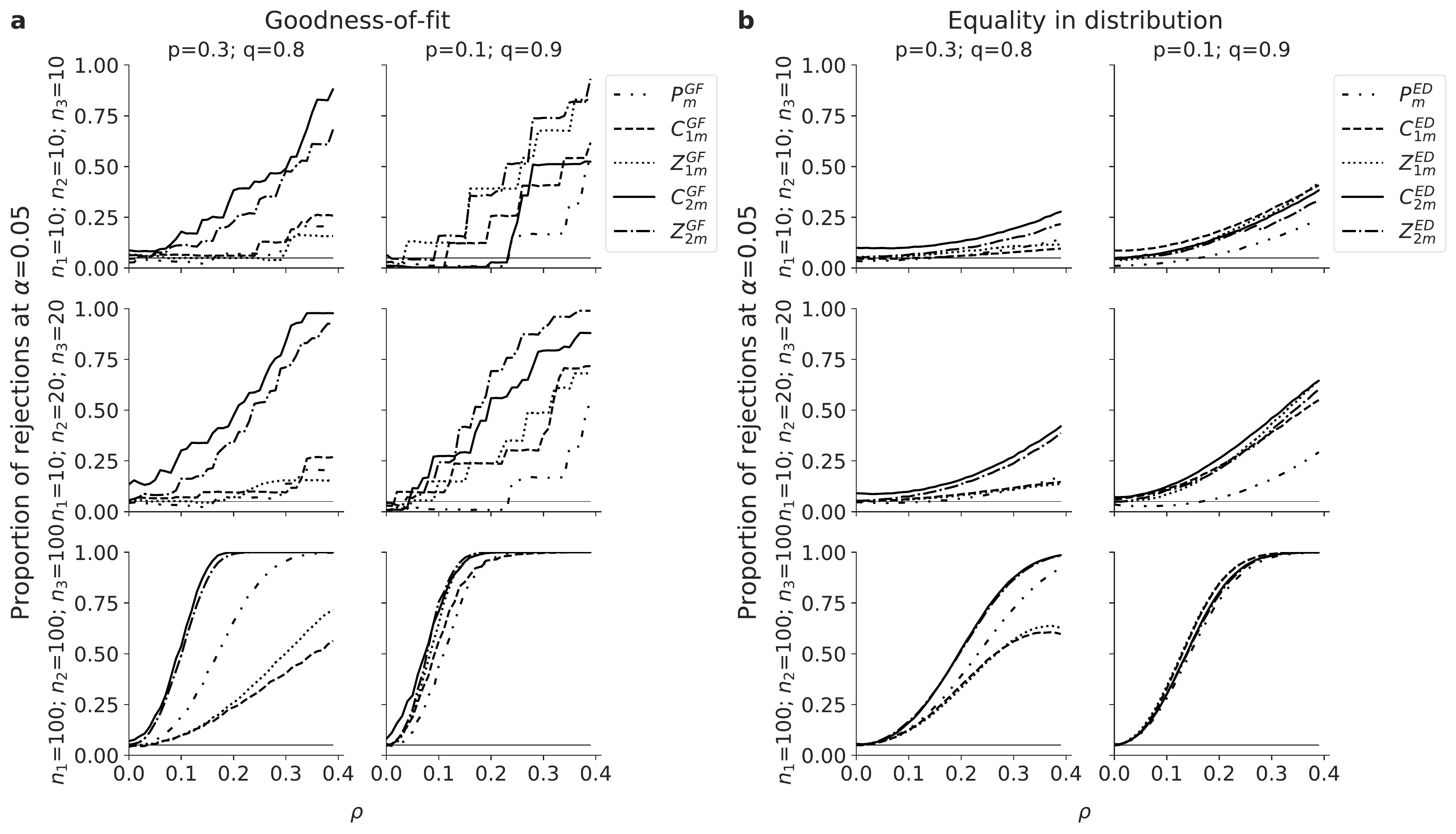}
	\end{center}
	\vspace{-1cm}\caption{
	Power comparison of convolution statistics against those calculated with the true covariance matrix and Pearson's $\chi^2$ statistics, under hypotheses $\mathbf{H}_0$ (for $\rho = 0$) and $\mathbf{H}_1^\rho$ (for $\rho>0$) when testing for goodness-of-fit (a) and equality in distribution (b). A full horizontal line depicts the nominal rejection level $\alpha = 0.05$.
}\label{fig:h0_pow}
\end{figure}

In Fig. \ref{fig:speed} we illustrate the convergence of the rejection rate for $m$ large to the significance level $\alpha$ under $\mathbf{H}_0$ and the power convergence under the alternative hypothesis $\mathbf{H}_1^{0.25}$.
When testing for goodness-of-fit, $C^{\text{GF}}_{2m}$ shows the highest rejection proportion which leads to good power under the alternative hypothesis (Fig. \ref{fig:speed}b), but a slow convergence to $\alpha$ under $\mathbf{H}_0$, reaching peaks of rejection up to $2\alpha$ (Fig. \ref{fig:speed}a).
$C^{\text{GF}}_{1m}$ and $P^{\text{GF}}_m$, instead, have similar behaviors better than $C^{\text{GF}}_{2m}$, with $P^{\text{GF}}_m$ outperforming $C^{\text{GF}}_{1m}$ in its most favorable case ($(p, q) = (0.3,0.8)$, bottom left panels from Fig. \ref{fig:speed}a,b) while the converse holds in the other cases.
When testing for equality in distribution, results for $C^{\text{ED}}_{1m}$ and $P^{\text{ED}}_m$ are similar (Fig. \ref{fig:speed}c,d), while $C^{\text{ED}}_{2m}$ presents a much faster convergence under $\mathbf{H}_0$ than its goodness-of-fit counterpart, as it approaches $\alpha$ already at $m=100$ (Fig. \ref{fig:speed}c).

\begin{figure}[t]
	\begin{center}
		\includegraphics[width=0.7\textwidth]{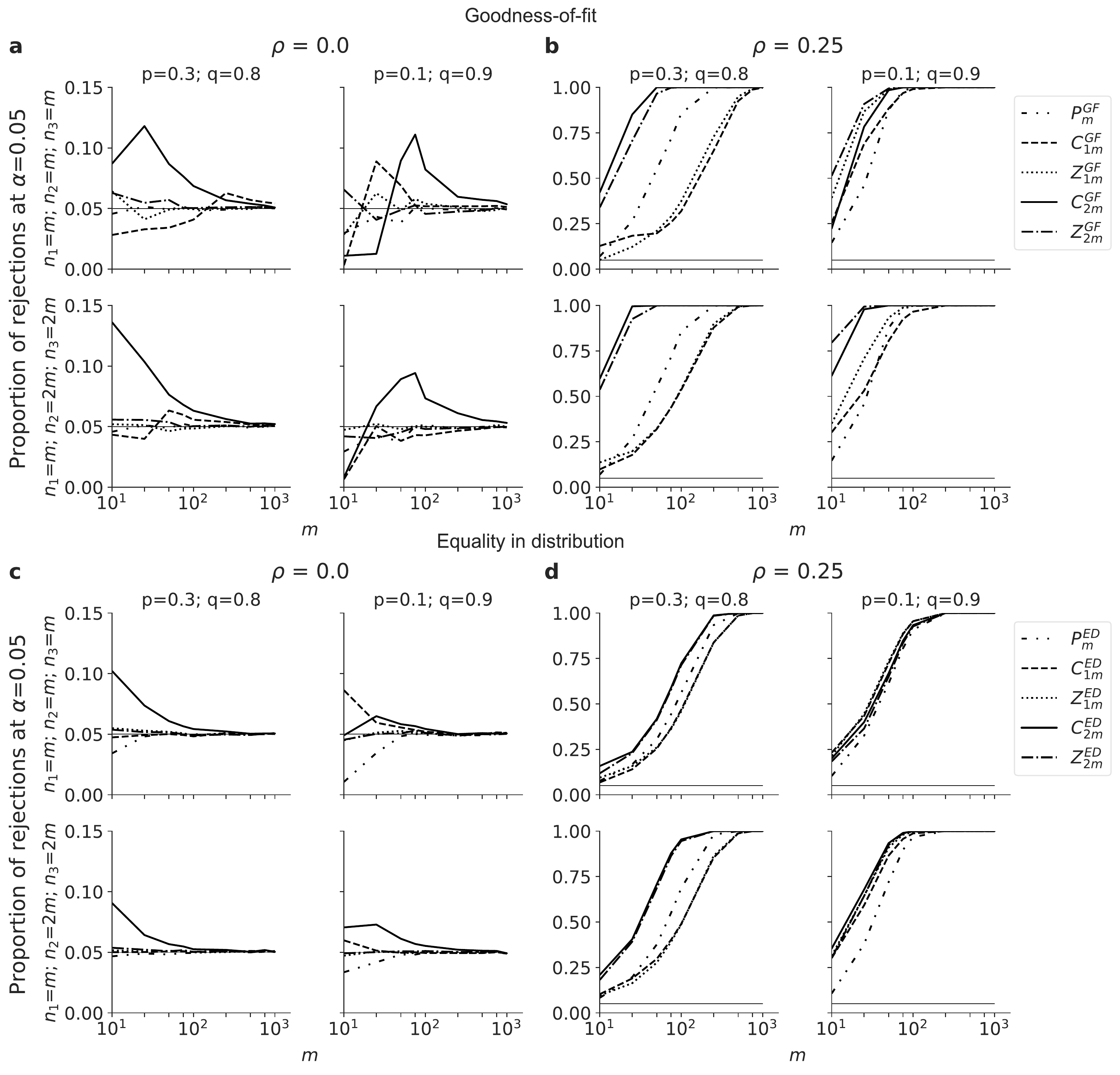}
	\end{center}
	\vspace{-1cm}\caption{
		Comparison of convergence speed of convolution statistics against those calculated with the true covariance matrix and Pearson's $\chi^2$ statistics, as samples size becomes large with $m$. A full horizontal line depicts the nominal rejection level $\alpha = 0.05$.
}\label{fig:speed}
\end{figure}

Together, Figs. \ref{fig:h0_pow} and \ref{fig:speed} suggest a tendency of the convolution statistics to attain a more anti-conservative behavior (type I error higher than $\alpha$), while for Person's $\chi^2$ statistic this is more conservative (type I error lower than $\alpha$).

Lastly, in Fig. \ref{fig:p_pow} we analyze the convolution statistic in the case of covariance matrix that is near a reduced rank form, when its smallest positive eigenvalue approaches zero. Equivalently, this situation occurs if the roots of the PMVs for $X_1$ and $X_2$ are close, i.e. $(p-1)/p - (q-1)/q$ becomes null.
To this end, we fix $q \in (0,1)$, so that $\Rnk{\matr{\Psi}} = 1$ if $p = q$ (but still $\Rnk{\matr{\Psi}+\matr{\Xi}}=2$), and we compare the proportion of rejections when the roots are well separated or close each other, under both $\mathbf{H}_0$ and $\mathbf{H}_1^{0.25}$.

\begin{figure}[t]
	\begin{center}
		\includegraphics[width=0.8\textwidth]{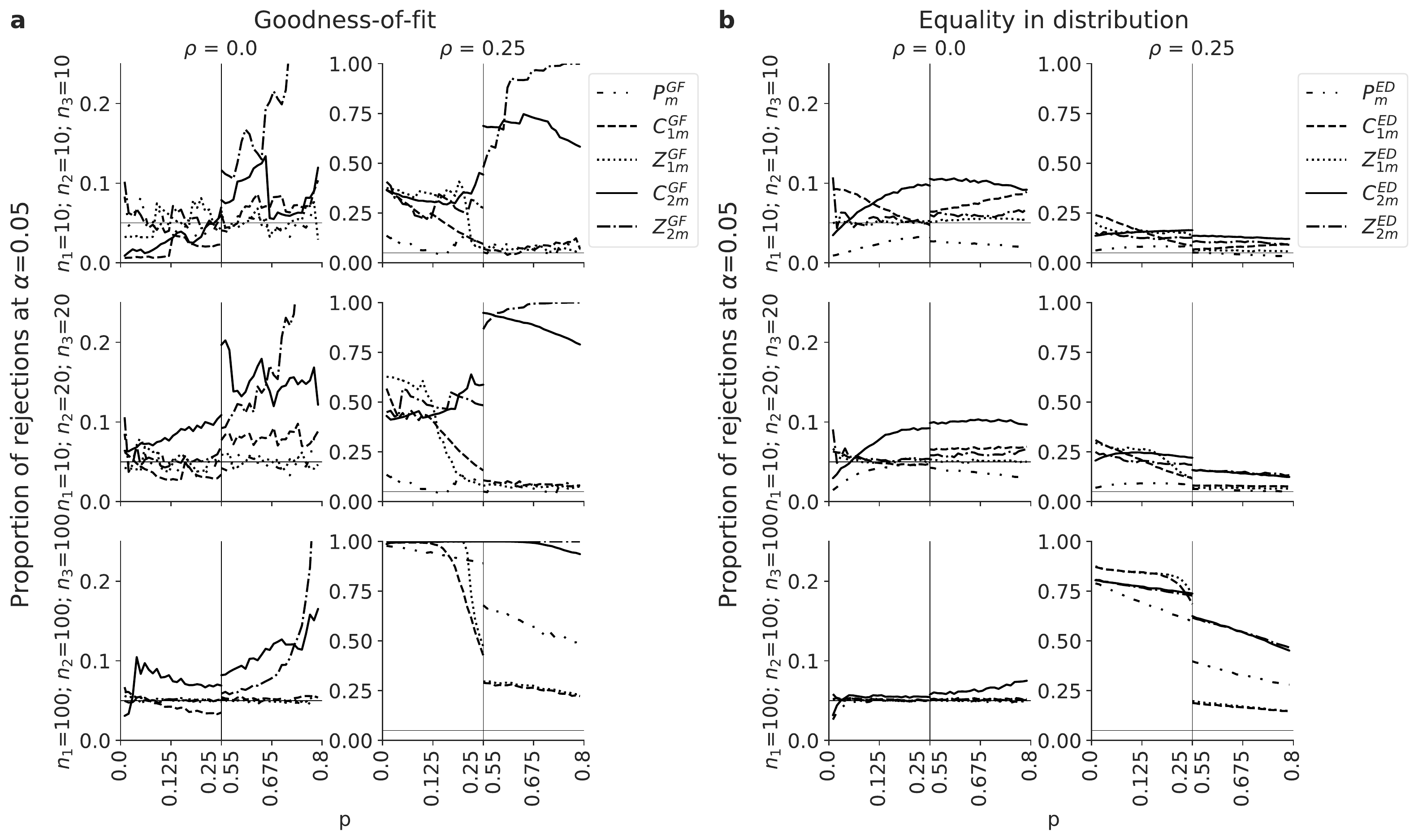}
	\end{center}
	\vspace{-1cm}\caption{
	Proportion of rejections when the smallest positive eigenvalue of $\matr{\Psi}$ tends to zero (when $p$ approaches $q = 0.8$, under the model with $X_1 \sim (1-p, p)$, $X_2 \sim (1-q, q)$) and when the rule-of-thumb for Pearson's $\chi^2$ statistic is violated (when $p$ approaches $0$). Convolution statistics are compared against those calculated with the true covariance matrix and Pearson's $\chi^2$ statistics. A full horizontal line depicts the nominal rejection level $\alpha = 0.05$.
}\label{fig:p_pow}
\end{figure}

For small samples, $C^{\text{GF}}_{1m}$ and $P^{\text{GF}}_m$ present similar performance under $\mathbf{H}_0$ (Fig. \ref{fig:p_pow}a, left panels), while $P^{\text{ED}}_m$ and $C^{\text{ED}}_{1m}$ are, respectively, conservative and anti-conservative (Fig. \ref{fig:p_pow}b, left panels).
The power under $\mathbf{H}_1^{0.25}$ favors the use of $C^{\text{GF}}_{1m}$ over $P^{\text{GF}}_{m}$ for small samples, but, for large samples, $P^{\text{GF}}_{m}$ outperforms $C^{\text{GF}}_{1m}$ when $p$ is near $q$ (Fig. \ref{fig:p_pow}a, right panels). As the spread between $p$ and $q$ widens, eventually $C^{\text{GF}}_{1m}$ performs better than $P^{\text{GF}}_{m}$. This commentary is also true for the equality in distribution statistic counterparts (Fig. \ref{fig:p_pow}b, right panels).

As predicted from Figs. \ref{fig:h0_pow} and \ref{fig:speed} analyses, the behavior of $C^{\text{GF}}_{2m}$ shows higher proportion of rejections.
Furthermore, due to $\Rnk{\matr{\Psi}} = 1$, as $p$ approaches $q$, $C^{\text{GF}}_{2m}$ undertakes a dramatic deviation from the nominal rejection proportion $\alpha$, indicative of the consistency failure of $(\hat{\matr{\Psi}}_m^2)^+$ in the estimation of $\matr{\Psi}^+$ when $p = q$  (Fig. \ref{fig:p_pow}a, bottom left panels).
For large samples, we see that $C^{\text{ED}}_{2m}$ is more powerful than $C^{\text{ED}}_{1m}$, since $\Rnk{\matr{\Psi}+\matr{\Xi}}=2$, and also than $P^{\text{ED}}_m$ when the roots are close  (Fig. \ref{fig:p_pow}b, bottom panels).
The discrepancy between $C^{\text{GF}}_{2m}$ and $C^{\text{ED}}_{2m}$ behaviors highlights the merit for the rank analysis of Section 3 
and shows the danger of setting the same degrees of freedom for the convolution as for Pearson's $\chi^2$ statistics without careful consideration.

To summarize the evidence above, we propose the following as general usage guidelines:
when the number of samples is large or unbalanced, and PMV's roots are distinct, the convolution statistic with maximum rank possible provides the best power;
as PMV's roots become closer or low sample size hiders the estimation of the covariance matrix pseudo-inverse, the convolution statistic with reduced rank should be employed to ensure a good compromise between type I and type II errors control;
Pearson's $\chi^2$ is still recommended over the convolution statistics, but only for small sample data and only if the rule-of-thumb is not violated, in case the type I error must be conservatively controlled and type II error is considered of secondary importance.
Ultimately, it is advisable to run comparative simulations over specific scenarios of interest that largely differ from the one considered above.

\vskip 3mm

\noindent 5. DISCUSSION

In this work, we have shown how to test hypotheses about the sum of discrete random variables using the operation of discrete convolution, from possibly unbalanced datasets and without restrictions to specific distribution classes.
Asymptotic properties of the convolution were combined with the generalized Wald's method to solve the testing problems of: goodness-of-fit, equality in distribution and sub-independence.
These results have been achieved studying the rank of the limiting covariance matrix, from the convolution of PMVs. Of note, such rank was determined as a function of the number of roots shared between the probability generating functions of the random variables considered.
Benchmarking simulations were conducted on a simple parametric model covering several situations of interest and, in most of these, the convolution statistic performed better than Pearson's.

Perspective studies are still required to complete the description of the convolution statistics, including the cases where PMVs' support is not connected, i.e. in $\Delta^{r} \setminus \Delta^{r}_{\text{Int}}$ for $r>0$.
Finally, improved strategies for the estimation of the pseudo-inverse matrix, that are ill-conditioned, should be considered to ameliorate type I and II errors.
\vskip 3mm

\noindent 5. ACKNOWLEDGMENTS

We thank Julia Marchingo, Andrey Kan, Susanne Heinzel and Phil Hodgkin at the Walter and Eliza Hall Institute of Medical Research for collaborating with us on the scientific problem that motivated the development of the convolution statistic and associated tests.
The research leading to these results has received funding from the European Union Seventh Framework Programme (FP7/2007–2013) under grant agreement 317040 (QuanTI) and by Science Foundation Ireland Grant 12IP1263.
On behalf of all authors, the corresponding author states that there is no conflict of interest.

\noindent 6. CODE AVAILABILITY

Python 3 code for simulation and testing is publicly available at
``https://github.com/
GiulioPr/Discrete\_convolution\_statistic''.

\bibliography{Bibliography}

\clearpage

\noindent APPENDIX

\begin{proof}[Proof of Lemma \ref{lemma:equiv}]
	Without loss of generality, it is possible to shift from the lattice $\Lambda(\zeta)$ to the set of integers $\mathbb{Z}$ through the natural isomorphism $\phi \colon \Lambda(\zeta) \rightarrow \mathbb{Z}$, $\phi(\zeta u) = u$ for every $u \in \mathbb{Z}$.
	Using this function, we define $A^\prime_i = a_i \phi(A_i)$ and $B^\prime_j = b_j \phi(B_j)$ for $i=1, \ldots, k$, $j=1, \ldots, h$, so to account for the multiplicative constants $a_1, \ldots, a_k, b_1, \ldots, b_h$ in the variables $A^\prime_1, \ldots, A^\prime_k, B^\prime_1, \ldots, B^\prime_h$ mapping $\Omega$ to $\mathbb{N}\cup\{0\}$, and reduce \eqref{h_0:initial} into 
	\begin{equation}\label{pd:intro_mod1}
	\mathbf{H}_0\colon \phi(a_0) + \sum_{i=1}^{k} A^\prime_i \sim \phi(b_0) + \sum_{i=1}^{h} B^\prime_i.
	\end{equation}
	Given $\tau_i = \min\{j \colon \Pro{A^\prime_i = j}>0\}$ for $i=1, \ldots, k$ and $\tau_{k+i} = \min\{j \colon \Pro{B^\prime_i = j}>0\}$ for $i=1, \ldots, h$, that are well defined since the variables $A^\prime_1, \ldots, A^\prime_k, B^\prime_1, \ldots, B^\prime_h$ are assumed finite, we rewrite \eqref{pd:intro_mod1} as
	\begin{equation}\label{pd:intro_mod2}
	\mathbf{H}_0\colon a_0 + \sum_{i=1}^{k} \tau_i + \sum_{i=1}^{k} (A^\prime_i-\tau_i) \sim b_0 + \sum_{i=1}^{h} \tau_{k+i} + \sum_{i=1}^{h} (B^\prime_i - \tau_{k+i}).
	\end{equation}
	For the null hypothesis \eqref{pd:intro_mod2} to be true, $a_0 + \sum_{i=1}^{k} \tau_i = b_0 + \sum_{i=1}^{h} \tau_{k+i}$ must hold. Otherwise, for example, if $a_0 + \sum_{i=1}^{k} \tau_i < b_0 + \sum_{i=1}^{h} \tau_{k+i}$, by definition of $\tau_1, \ldots, \tau_{k+h}$ we would have
	\begin{equation*}
	\begin{aligned}
	0 &= \Pro{b_0 + \sum_{i=1}^{h} \tau_{k+i} + \sum_{i=1}^{h} (B^\prime_i-\tau_{k+i}) = a_0 + \sum_{i=1}^{k} \tau_i } \\
	&= \Pro{a_0 + \sum_{i=1}^{k} \tau_i + \sum_{i=1}^{k} (A^\prime_i-\tau_i) = a_0 + \sum_{i=1}^{k} \tau_i } \geq \prod_{i=1}^{k}\Pro{A^\prime_i = \tau_i}  > 0,
	\end{aligned}
	\end{equation*}
	that is impossible.
	Therefore \eqref{pd:intro_mod2} is equivalent to
	\begin{equation*}
	\mathbf{H}_0\colon \sum_{i=1}^{k} (A^\prime_i-\tau_i) \sim \sum_{i=1}^{h} (B^\prime_i - \tau_{k+i}),
	\end{equation*}
	which, in turn, can be reduced to the form \eqref{h_0:reduced}
	by defining $X_i = A^\prime_i - \tau_i$ for $i=1, \ldots, k$ and $Y_i = B^\prime_i - \tau_{k+i}$ for $i=1, \ldots, h$ thus accounting for the subtraction the constants in the distribution of $X_i$ and $Y_i$. As a consequence, the support of $X_i$ is $\{0, \ldots, r_i\}$ for some positive integer $r_i \in \mathbb{N}$ and
	\begin{equation}\label{supp_0}
	\Pro{X_i = 0}>0
	\end{equation}
	for every $i = 1, \ldots, k$. The same applies to $Y_1, \ldots, Y_h$.
\end{proof}

\begin{proof}[Proof of Proposition \ref{proposition:mle}]
	Given $a_{ij} \in \{0, \ldots, r_i\}$, the independent sample from $X_{ij}$ for $i=1,\ldots, k$ and $j=1, \ldots, n_i$, the MLE of $\vect{x}_1 \ast \ldots \ast \vect{x}_k$ is the element $\vect{\theta} \in \Delta^{s}$ that maximizes
	\begin{multline*}
	\Pro{X_{11}=a_{11}, \ldots, X_{1n_1}=a_{1n_1}, \ldots, X_{k1}=a_{k1}, \ldots, X_{kn_k}=a_{kn_k} \vert \vect{\theta}} \\
	= \prod_{i=1}^{k}\Pro{X_{i1}=a_{i1}, \ldots, X_{in_i}=a_{in_i}\vert \vect{\theta}}
	\end{multline*}
	On the right hand side, for fixed $i\in \{1, \ldots, k\}$, $\Pro{X_{i1}=a_{i1}, \ldots, X_{in_i}=a_{in_i}\vert \vect{\theta}}$ achieves maximum value for any $\vect{\theta} = \vect{\theta}_1 \ast \ldots \ast \vect{\theta}_k$ such that $\vect{\theta}_i = \hat{\vect{x}}_{in_i}$, with $\vect{\theta}_j \in \Delta^{r_j}$ for any $j$. In particular $\vect{\theta} = \hat{\vect{x}}_{1n_1} \ast \ldots \ast \hat{\vect{x}}_{kn_k}$ maximizes all factors, hence the whole product.
\end{proof}

\begin{proof}[Proof of Lemma \ref{lemma:rnk}]
	We first prove \eqref{ker:psi} by induction on $k$. For $k=2$ we have to show that
	\begin{equation}\label{rnk:k2}
	\Ker{\Tmat{\vect{x}_{(1)}}^\prime} \cap \Ker{\Tmat{\vect{x}_{(2)}}^\prime} =  \Ker{\begin{bmatrix}  \Tmat{\vect{x}_{(1)}}^\prime \\ \Tmat{\vect{x}_{(2)}}^\prime \end{bmatrix}} =  \Ker{\Tmat{\vect{g}_2}^\prime},
	\end{equation}
	where $\vect{g}_2 = \gcd{(\vect{x}_{(1)}, \vect{x}_{(2)})} = \gcd{(\vect{x}_2, \vect{x}_1)} \in \mathbb{R}^{r_{g_2}+1}$ and $r_{g_2}\geq 0$. By definition of $\vect{g}_2$, there exist two coprime vectors $\vect{z}_1 \in \mathbb{R}^{r_1 - r_{g_2}+1}, \vect{z}_2 \in \mathbb{R}^{r_2 - r_{g_2}+1}$ such that $\vect{x}_{(1)} = \vect{z}_1 \ast \vect{g}_2$ and $\vect{x}_{(2)} = \vect{z}_2 \ast \vect{g}_2$ so that, by composition of discrete convolution, we can write
	\begin{equation*}
	\begin{bmatrix}  \Tmat{\vect{x}_{(1)}}^\prime \\ \Tmat{\vect{x}_{(2)}}^\prime \end{bmatrix}= \begin{bmatrix}  \Tmat{\vect{z}_{1}}^\prime \\ \Tmat{\vect{z}_{2}}^\prime \end{bmatrix} \Tmat{\vect{g}_2}^\prime,
	\end{equation*}
	where $\Tmat{\vect{g}_{2}}^\prime \in \mathbb{R}^{r_1+r_2-r_{g_2}+1} \times \mathbb{R}^{r_1+r_2+1}$, $\Tmat{\vect{z}_{1}}^\prime \in \mathbb{R}^{r_2+1} \times \mathbb{R}^{r_1+r_2-r_{g_2}+1}$ and $\Tmat{\vect{z}_{2}}^\prime \in \mathbb{R}^{r_1+1} \times \mathbb{R}^{r_1+r_2-r_{g_2}+1}$.
	As the number of columns of $\begin{bmatrix}  \Tmat{\vect{z}_{1}} & \Tmat{\vect{z}_{2}} \end{bmatrix}^\prime$ is lesser than the number of rows, since $r_1 + r_2 - r_{g_2} +1 \leq r_1 + r_2 +2$  $\Leftrightarrow$ $r_{g_2} +1 \geq 0$,
	to prove \eqref{rnk:k2}, it suffices to show that $\Rnk{\begin{bmatrix} \Tmat{\vect{z}_{1}} & \Tmat{\vect{z}_{2}} \end{bmatrix}^\prime}$ is of full rank $r_1 + r_2 - r_{g_2} +1$. By rank-nullity theorem, this is the case if and only if $\Nul{\begin{bmatrix} \Tmat{\vect{z}_{1}} & \Tmat{\vect{z}_{2}} \end{bmatrix}} = r_{g_2} + 1$.
	The latter is true by coprimeness between $\vect{z}_1$ and $\vect{z}_2$ and by matrix dimensionality, since the solutions $(\vect{a},\vect{b})$ with $\vect{a} \in \mathbb{R}^{r_1 + 1}$, $\vect{b} \in \mathbb{R}^{r_2 +1}$, to the homogeneous system of equations $\Tmat{\vect{z}_1} \vect{a} + \Tmat{\vect{z}_2} \vect{b} = \vect{z}_1 \ast \vect{a} + \vect{z}_2 \ast \vect{b} = \vect{0}$, are characterized by $\vect{a} = \vect{z}_2 \ast \vect{u}$, $\vect{b}=- \vect{z}_1 \ast \vect{u}$ with $\vect{u} \in \mathbb{R}^{r_{g_2} +1}$ vector of free parameters.
	Assuming \eqref{ker:psi} for $k$, we now prove the case $k+1$ to conclude. By inductive step and associative property of convolution, we can write
	\begin{equation}\label{ind_step:Tk}
	\bigcap_{i=1}^{k+1} \Ker{\Tmat{\vect{x}_{(i)}}^\prime} 
	= \Ker{\begin{bmatrix} \Tmat{\vect{g}_k}^\prime \Tmat{\vect{x}_{k+1}}^\prime \\ \Tmat{\vect{x}_{(k+1)}}^\prime \end{bmatrix}} = \Ker{\begin{bmatrix} \Tmat{\vect{g}_k \ast \vect{x}_{k+1}}^\prime \\ \Tmat{\vect{x}_{(k+1)}}^\prime \end{bmatrix}}.
	\end{equation}
	Thus, by the definition of $\vect{g}_{k+1} = \gcd(\vect{x}_{(1)}, \ldots, \vect{x}_{(k+1)}) \in \mathbb{R}^{r_{g_{k+1}} +1}$, the properties of $\gcd$ lead to
	$\vect{g}_{k+1} = \gcd(\vect{g}_{k} \ast \vect{x}_{k+1}, \vect{x}_1 \ast \ldots \ast \vect{x}_{k}) = \vect{g}_{k} \gcd(\vect{x}_{k+1}, \vect{u}_{k+1}) 	$,
	where $\vect{u}_{k+1} \in \mathbb{R}^{\sum_{i=1}^k r_i - r_{g_k} +1}$ is such that $\vect{u}_{k+1} \ast \vect{g}_{k+1} = \vect{x}_1 \ast \ldots \ast \vect{x}_{k}$. In particular, we deduce
	\begin{equation}\label{rk+1:rk}
	r_{g_{k+1}} \geq r_{g_{k}}.
	\end{equation}
	As in the previous step, we introduce $\vect{z}_1 \in \mathbb{R}^{r_{g_k}+r_{k+1}- r_{g_{k+1}}+1}$ and $\vect{z}_2 \in \mathbb{R}^{\sum_{i=1}^{k} r_i- r_{g_{k+1}}+1}$ coprime vectors such that $\vect{z}_1 \ast \vect{g}_{k+1} = \vect{g}_{k} \ast \vect{x}_{k+1}$ and $\vect{z}_2 \ast \vect{g}_{k+1} = \vect{x}_1 \ast \ldots \ast \vect{x}_k$. Thus, resuming \eqref{ind_step:Tk}, 
	\begin{equation*}
	\bigcap_{i=1}^{k+1} \Ker{\Tmat{\vect{x}_{(i)}}^\prime} = \Ker{\begin{bmatrix} \Tmat{\vect{z}_1}^\prime \\ \Tmat{\vect{z}_2}^\prime \end{bmatrix} \Tmat{\vect{g}_{k+1}}^\prime} = \Ker{\Tmat{\vect{g}_{k+1}}^\prime}
	\end{equation*}
	where last equality is analogous as for the case $k=2$, given that the number of columns of $\begin{bmatrix} \Tmat{\vect{z}_1}& \Tmat{\vect{z}_2} \end{bmatrix}^\prime \in \mathbb{R}^{\sum_{i=1}^{k+1} r_i - r_{g_{k}} +2} \times \mathbb{R}^{\sum_{i=1}^{k+1} r_i - r_{g_{k+1}} +1}$ is lesser than the number of rows, that is
	\begin{equation*}
	\sum_{i=1}^{k+1} r_i - r_{g_{k+1}} +1 \leq \sum_{i=1}^{k+1} r_i - r_{g_{k}} +2 \quad \Leftrightarrow \quad r_{g_{k+1}} + 1 \geq r_{g_{k}},
	\end{equation*}
	which holds true by \eqref{rk+1:rk}.
	To prove \eqref{ker:psi_xi}, we note that the first equation therein is established by \eqref{ker:psi}, thus only the second equivalence shall be proved. Once more, we define $\vect{z}_1$ and $\vect{z}_2$ such that such that $\begin{bmatrix} \Tmat{\vect{z}_1} & \Tmat{\vect{z}_2} \end{bmatrix}^\prime \Tmat{\tilde{\vect{g}}}^\prime = \begin{bmatrix} \Tmat{\vect{g}_k} & \Tmat{\bar{\vect{g}}_h} \end{bmatrix} ^\prime$ with $\Tmat{\vect{z}_1}^\prime \in \mathbb{R}^{\sum_{i=1}^k r_i - r_{g_{k}} + 1} \times \mathbb{R}^{\sum_{i=1}^k r_i - r_{\tilde{g}} + 1}, \Tmat{\vect{z}_2}^\prime \in \mathbb{R}^{\sum_{i=1}^k r_i - r_{\bar{g}_{h}} + 1} \times \mathbb{R}^{\sum_{i=1}^k r_i - r_{\tilde{g}} + 1}$. Again, the conclusion is verified by checking that, in the matrix $\begin{bmatrix} \Tmat{\vect{z}_1} & \Tmat{\vect{z}_2} \end{bmatrix} ^\prime$, there are less rows than columns, namely
	\begin{equation*}
	\sum_{i=1}^k r_i +r_{\tilde{g}}+1 \geq r_{g_k} + r_{\bar{g}_h},
	\end{equation*}
	which holds true since $\vect{g}_k \ast \bar{\vect{g}}_h = \gcd(\vect{g}_k, \bar{\vect{g}}_h) \ast \lcm(\vect{g}_k, \bar{\vect{g}}_h)=\tilde{\vect{g}}\ast \lcm(\vect{g}_k, \bar{\vect{g}}_h)$ and $\lcm(\vect{g}_k,\allowbreak \bar{\vect{g}}_h)$ is a divisor of $\vect{z}=\vect{x}_1 \ast \ldots \ast \vect{x}_k = \vect{y}_1 \ast \ldots \ast \vect{y}_h$ (as polynomials), so $\deg\lcm(\vect{g}_k, \bar{\vect{g}}_h) \leq \sum_{i=1}^k r_i $.
\end{proof}

\begin{proof}[Proof of Theorem \ref{thm:rnk}]
	The relations \eqref{thm_kerpsi} and \eqref{thm_kerpsixi} derive as applications of Lemma \ref{lemma:rnk} to \eqref{ob:kerpsi} and \eqref{ob:kerpsixi}, respectively.
	Lastly, \eqref{rank_psi} and \eqref{rank_psi_xi} follow from \eqref{thm_kerpsi} and \eqref{thm_kerpsixi}, respectively, through rank-nullity properties of linear transformations from a finite-dimensional domain.
\end{proof}

\begin{proof}[Proof of Corollary \ref{cor:coprime_rnk}]
	This follows from Theorem \ref{thm:rnk}, as $\Ker{\Tmat{\vect{g}_k}^\prime} = \Ker{\Tmat{\tilde{\vect{g}}}^\prime} = \{(0, \ldots, 0)\} \subseteq \mathbb{R}^{s+1}$ by coprimeness.
\end{proof}

\begin{proof}[Proof of Corollary \ref{cor:general_rnk}]
	It suffices to show \eqref{lbound_rank_psi}, since \eqref{lbound_rank_psi_xi} follows from the same reasoning. Relation \eqref{ker_STi1}, Lemma \ref{lemma:rnk} ans Theorem \ref{thm:rnk} imply
	\begin{equation*}
	\begin{aligned}
	\Ker{\matr{\Psi}} &= \Ker{\Tmat{\vect{g}_{k}}^\prime} \oplus \left( \bigcap_{i=1}^{k}
	\{\vect{v} \in \mathbb{R}^{s+1} \colon \Tmat{\vect{g}_{k}}^\prime \vect{v} \in \Ker{\Smat{\vect{x}_i}} \}
	\right)\\
	& \subseteq \langle \vect{1}_{s} \rangle \oplus \Ker{\Tmat{\vect{g}_{k}}^\prime} \oplus_{i=1}^{k}
	\{\vect{v} \in \mathbb{R}^{s+1} \colon \Tmat{\vect{g}_{k}}^\prime \vect{v} \in E_i \},
	\end{aligned}
	\end{equation*}
	since $\Ker{\Smat{\vect{x}_i}} = \langle \vect{1}_{r_i} \rangle \oplus E_i$, given $E_i = \langle \{\vect{e}^i_l \colon l \in L_i\}\rangle$ with $e^i_{lu} = \delta_{l,u}$ for $u=0, \ldots, r_i$ and $i=1, \ldots, k$. As the dimension of $E_i$ equals the cardinality of $L_i$, \eqref{lbound_rank_psi} follows from
	\begin{equation*}
	\Nul{\matr{\Psi}} \leq 1 + r_{g_k} + \sum_{i=1}^{k} \vert L_i \vert.
	\end{equation*}
\end{proof}

\begin{proof}[Proof of Corollary \ref{cor:sub_independence}]
	To prove \eqref{s_n} we write the relation
	\begin{equation}\label{asy_x_z}
	\sqrt{m} 
	\begin{pmatrix}
	\hat{\vect{x}}_{1m} - \vect{x}_{1} \\
	\vdots  \\
	\hat{\vect{x}}_{km} - \vect{x}_{k} \\
	\hat{\vect{z}}_{m} - \vect{z}  \\
	\end{pmatrix}
	\Asy{m}{\infty}
	\mathcal{N}\left(
	\begin{bmatrix}
	\Smat{\vect{x}_1} & \matr{0} &\dots & \matr{0} & \matr{A}_1\\
	\matr{0} & 	\ddots & \ddots & \vdots & \vdots\\
	\vdots  & \ddots  & \ddots & \matr{0} & \vdots \\
	\matr{0} & \dots & \matr{0} & \Smat{\vect{x}_k} & \matr{A}_k \\
	\matr{A}^\prime_1 & \dots & \dots & \matr{A}^\prime_k & \Smat{\vect{z}}  \\
	\end{bmatrix}
	\right),
	\end{equation}
	where the $\matr{0}$ entries follow from the uncorrelatedness implied by sub-independence, and $\matr{A}_{i} = \vect{D}(\vect{x}_i)( \Tmat{\vect{x}_{(i)}}^\prime - (\vect{z} \dots \vect{z})^\prime )$, given $\vect{D}(\vect{x}_i)$ the matrix with $\vect{x}_i$ at the diagonal and $0$ elsewhere.
	Then, \eqref{s_n} is derived by applying the delta method to \eqref{asy_x_z}, with
	\begin{equation*}
	\matr{\Upsilon} = \matr{\Psi} - \sum_{i=1}^{k} \matr{A}^\prime_i \Tmat{\vect{x}_{(i)}}^\prime
	- \sum_{i=1}^{k} \Tmat{\vect{x}_{(i)}} \matr{A}_i + \Smat{\vect{z}} = \Smat{\vect{z}} - \matr{\Psi},
	\end{equation*}
	since
	\begin{equation*}
	\sum_{i=1}^{k} \Tmat{\vect{x}_{(i)}} \matr{A}_i = \sum_{i=1}^{k} \Tmat{\vect{x}_{(i)}} ( \vect{D}(\vect{x}_i) - \vect{x}_i \vect{x}_i^\prime) \Tmat{\vect{x}_{(i)}}^\prime = 
	\sum_{i=1}^{k} \Tmat{\vect{x}_{(i)}} \Smat{\vect{x}_i} \Tmat{\vect{x}_{(i)}}^\prime = \matr{\Psi}.
	\end{equation*}
	To obtain $\Rnk{\matr{\Upsilon}}=s$,  $\matr{\Upsilon}$ is rewritten as
	\begin{equation*}
	\begin{aligned}
	\matr{\Upsilon} &= \Smat{\vect{z}} - \matr{\Psi} = \\
	&=
	\begin{bmatrix}
	\matr{I} & \Tmat{\vect{x}_{(1)}} & \cdots & \Tmat{\vect{x}_{(k)}}
	\end{bmatrix}
	\begin{bmatrix}
	\Smat{\vect{z}} & \matr{0} &\dots & \dots & \matr{0}\\
	\matr{0} & 	\Smat{\vect{x}_1} &\ddots & \ddots & \vdots\\
	\vdots  & \ddots  & \ddots & \ddots &  \vdots \\
	\vdots & \ddots & \ddots & \ddots & \matr{0} \\
	\matr{0} & \dots & \dots & \matr{0} & \Smat{\vect{x}_k}  \\
	\end{bmatrix}
	\begin{bmatrix}
	\matr{I} \\ -\Tmat{\vect{x}_{(1)}}^\prime \\ \vdots \\ -\Tmat{\vect{x}_{(k)}}^\prime
	\end{bmatrix} \\
	&= \matr{M}_1 \matr{M}_2 \matr{M}_3,
	\end{aligned}
	\end{equation*}
	where $\matr{I}$ is the identity matrix of dimension $s+1 \times s+1$.
	In fact, following the rationale as for Theorem \ref{thm:rnk} and \eqref{ker_STi1}, $\Ker{\matr{M}_2 \matr{M}_3} = \langle \vect{1}_{s} \rangle$, hence
	$\Ker{\matr{\Upsilon}} = \langle \vect{1}_{s} \rangle \oplus \{\vect{v} \in \mathbb{R}^{s+1} \colon \matr{M}_2 \matr{M}_3 \vect{v} \in \Ker{\matr{M}_1} \}$.
	Since $\Ker{\matr{M}_1}$ is a space orthogonal to $\Img{\matr{M}_1^\prime}$, then $\Ker{\matr{M}_1} \subseteq \{(\vect{0},\vect{w}) \in \mathbb{R}^{2s+k+1} \colon \vect{w} \in \mathbb{R}^{s+k}\}$ and therefore
	$\{\vect{v} \in \mathbb{R}^{s+1} \colon \matr{M}_2 \matr{M}_3 \vect{v} \in \Ker{\matr{M}_1}\} = \langle \vect{1}_{s} \rangle$, which leads to $\Rnk{\matr{\Upsilon}} = s$. To conclude, \eqref{asy_chi:sub_independence} derives from Proposition \ref{thm:moore}.
\end{proof}
\end{document}